\documentclass[a4paper]{amsart}
\usepackage{amsmath,amsthm,amssymb,latexsym,epic,bbm,comment}
\usepackage{graphicx,enumerate,stmaryrd}
\usepackage[all,2cell]{xy}
\xyoption{2cell}

\usepackage[notref,notcite]{showkeys}
\usepackage[active]{srcltx}
\usepackage[parfill]{parskip}
\UseTwocells

\newtheorem{theorem}{Theorem}

\newtheorem{corollary}[theorem]{Corollary}
\newtheorem{proposition}[theorem]{Proposition}
\newtheorem{example}[theorem]{Example}

\newcommand{\tto}{\twoheadrightarrow}

\font\sc=rsfs10
\newcommand{\cC}{\sc\mbox{C}\hspace{1.0pt}}
\newcommand{\cG}{\sc\mbox{G}\hspace{1.0pt}}

\newcommand{\cR}{\sc\mbox{R}\hspace{1.0pt}}

\newcommand{\cS}{\sc\mbox{S}\hspace{1.0pt}}

\font\scc=rsfs7
\newcommand{\ccC}{\scc\mbox{C}\hspace{1.0pt}}

\newcommand{\ccS}{\scc\mbox{S}\hspace{1.0pt}}

\begin{document}

\title[Multisemigroups with multiplicities]{Multisemigroups with multiplicities\\ 
and complete ordered semi-rings}

\author{Love Forsberg}
\date{}

\begin{abstract}
Motivated by appearance of multisemigroups in the study of additive $2$-categories,
we define and investigate the notion of a multisemigroup with multiplicities. 
This notion seems to be better suitable for applications in higher representation theory.
\end{abstract}
\maketitle

\section{Introduction}\label{s1}

Abstract $2$-representation theory originates from the papers \cite{BFK,Kh,CR} and is nowadays 
formulated as study of $2$-representations of additive $\Bbbk$-linear $2$-categories, 
where $\Bbbk$ is the base field, see e.g. \cite{Ma} for details. Various aspects of 
general $2$-representation theory of abstract additive $\Bbbk$-linear $2$-categories were studied 
in the series \cite{MM1,MM2,MM3,MM4,MM5,MM6} of papers by Mazorchuk and Miemietz. An important role 
in this study is played by the so-called {\em multisemigroup} of an 
additive $\Bbbk$-linear $2$-category which was originally introduced in \cite{MM2}. 

Recall that a {\em multisemigroup} is a set $S$ endowed with a {\em multioperation},
that is a map $*:S\times S\to 2^S$ which satisfies the following {\em associativity} axiom:
\begin{displaymath}
\bigcup_{s\in a*b}s*c=\bigcup_{t\in b*c}a*t
\end{displaymath}
for all $a,b,c\in S$ (see \cite{KM15} for more details and examples). 
The original observation in \cite{MM2} is that the set $\mathcal{S}$
of isomorphism classes of indecomposable $1$-morphisms in an additive $\Bbbk$-linear $2$-category 
$\cC$ has the natural structure of a multisemigroup, given as follows: for two indecomposable
$1$-morphisms $\mathrm{F}$ and $\mathrm{G}$, we have
\begin{displaymath}
[\mathrm{F}]*[\mathrm{G}]=\{[\mathrm{H}]\,:\,
\mathrm{H} \text{ is isomorphic to a direct summand of }\mathrm{F}\circ\mathrm{G}\},
\end{displaymath}
where $[\mathrm{F}]$ stands for the isomorphism class of $\mathrm{F}$ and
$\circ$ denotes composition in $\cC$. We refer the reader to \cite{MM2} for details.
Combinatorics of this multisemigroup reflects and encodes various structural properties of the 
underlying additive $\Bbbk$-linear $2$-category and controls major parts of the  $2$-representation 
theory of the latter, see \cite{MM1,MM2,MM3,MM4,MM5,MM6} for details.

However, this notion of a multisemigroup of an additive $\Bbbk$-linear $2$-category has one disadvantage:
it seems to forget too much information. In more details, it only records information about 
direct summands appearing in the composition $\mathrm{F}\circ\mathrm{G}$, however, it forgets 
information about {\em multiplicities} with which these direct summands appear. As as result,
the multisemigroup of an additive $\Bbbk$-linear $2$-category can not be directly applied 
to the study of the split Grothendieck category of $\cC$ and linear representations of the latter.

It is quite clear how one can amend the situation: one has to define a weaker notion than a multisemigroup
which should keep track of multiplicities in question. This naturally leads to the notion of 
{\em multisemigroups with multiplicities}, or {\em multi-multisemigroups} which is the object of the
study in this paper (the idea of such object is mentioned in \cite[Remark~8]{MM2} without any details). 
Although the definition is rather obvious under natural finiteness assumption, the
full generality setup has some catches and thus requires some work. The main aim of the present paper
is to analyze this situation and to propose a ``correct'' definition of a multi-multisemigroup.
The main value of the paper lies not in the difficulty of the presented results but rather in
the thorough analysis of the situation which explores various connections of the theory we initiate. 
Our approach utilizes the algebraic theory of complete semirings.

The paper is organized as follows: in Section~\ref{s2} we outline in more details the motivation for
this study coming from the higher representation theory. In Section~\ref{s3} we collect all notions 
and tool necessary to define our main object: multi-multisemigroups, or, how we also call them,
multisemigroups with multiplicities bounded by some cardinal.
Section~\ref{s4} ties back to the original motivation and is devoted to the analysis of 
multisemigroups with multiplicities appearing in the higher representation theory.
In Section~\ref{s5} we give some explicit examples. In Section~\ref{s6} we discuss 
multi-multisemigroups for different sets of multiplicities and 
connection to twisted semigroup algebras.
Finally, in Section~\ref{s7}, we describe multi-multisemigroups as algebras
over complete semirings.

\section{Motivation from the $2$-representation theory}\label{s2}

\subsection{$2$-categories}\label{s2.1}

For details on $2$-categories we refer the reader to \cite{Le,Ma}. 

A {\em $2$-category} is a category enriched over the category of small categories. In other words,
a $2$-category $\cC$ consists of 
\begin{itemize}
\item objects,
\item small categories of morphisms,
\item identity objects in the appropriate
morphisms categories,
\item bifunctorial composition
\end{itemize}
and all these data is supposed to satisfy all the obvious axioms.
The canonical example of a $2$-category is the category $\mathbf{Cat}$ of small categories where 
\begin{itemize}
\item objects are small categories,
\item morphisms are categories where objects are functors and morphisms are natural
transformations of functors,
\item identities are the identity functors,
\item composition is composition of functors. 
\end{itemize}
As usual, if $\cC$ is a $2$-category and $\mathtt{i},\mathtt{j}\in\cC$, the objects in 
$\cC(\mathtt{i},\mathtt{j})$ are called {\em  $1$-morphisms} and morphisms in 
$\cC(\mathtt{i},\mathtt{j})$ are called {\em  $2$-morphisms}. Composition of 
$2$-morphisms inside $\cC(\mathtt{i},\mathtt{j})$ is called {\em vertical}, while
composition of $2$-morphisms in $\cC$ is called {\em horizontal}.

Let $\Bbbk$ be a field. We will say that a $2$-category $\cC$ is {\em $\Bbbk$-admissible} provided that, 
\begin{itemize}
\item for any $\mathtt{i},\mathtt{j}\in\cC$, the category $\cC(\mathtt{i},\mathtt{j})$
is $\Bbbk$-linear, idempotent split and Krull-Schmidt,
\item composition is $\Bbbk$-bilinear.
\end{itemize}
For example, let $A$ be a finite-dimensional associative algebra and $\mathcal{C}$ a small category
equivalent to $A$-mod, then the $2$-full subcategory  $\cR_{(A,\mathcal{C})}$ of
$\mathbf{Cat}$ with unique object $\mathcal{C}$ and whose $1$-morphisms are right exact 
endofunctors on $\mathbb{C}$, is $\Bbbk$-admissible. The reason for this is the fact that 
$\cR_{(A,\mathcal{C})}(\mathcal{C},\mathcal{C})$ is equivalent to the category of
$A$-$A$--bimodules, see \cite{Ba} for details.

\subsection{Grothendieck category of a $\Bbbk$-admissible $2$-category}\label{s2.2}

Let $\mathcal{C}$ be an additive category. Then the {\em split Grothendieck group} 
$[\mathcal{C}]_{\oplus}$ of $\mathcal{C}$ is defined as the quotient of the free abelian group
generated by $[X]$, where $X\in \mathcal{C}$, modulo the relations $[X]+[Y]-[Z]$ whenever
$Z\cong X\oplus Y$. If $\mathcal{C}$ is idempotent split and Krull-Schmidt, then 
$[\mathcal{C}]_{\oplus}$ is isomorphic to the free abelian group generated by
isomorphism classes of indecomposable objects in $\mathcal{C}$.

Let $\cC$ be a $\Bbbk$-admissible $2$-category.  The associated {\em Grothendieck category}
$[\cC]_{\oplus}$, also called the {\em decategorification} of $\cC$, is defined as
the category such that
\begin{itemize}
\item $[\cC]_{\oplus}$ has the same objects as $\cC$,
\item for $\mathtt{i},\mathtt{j}\in[\cC]_{\oplus}$, we have  $[\cC]_{\oplus}(\mathtt{i},\mathtt{j}):=
[\cC(\mathtt{i},\mathtt{j})]_{\oplus}$,
\item identity morphisms in $[\cC]_{\oplus}$ are classes of the corresponding identity $1$-morphisms in $\cC$,
\item composition in $[\cC]_{\oplus}$ is induced from the composition in $\cC$.
\end{itemize}
We note that the category $[\cC]_{\oplus}$ is, by definition, preadditive, but not additive in general
(as, in general, no coproduct of objects in $\cC$ was assumed to exist).

\begin{example}\label{ex1}
{\em 
Let $S$ be a finite semigroup with an admissible partial order $\leq$. 
Define the $2$-category $\cS$ as follows:
\begin{itemize}
\item $\cS_S$ has one object $\mathtt{i}$;
\item $1$-morphisms in $\cS_S$ are elements from $S$;
\item composition of $1$-morphisms is given by multiplication in $S$;
\item for two $1$-morphisms $s,t\in S$, we have 
\begin{displaymath}
\mathrm{Hom}_{\ccS}(s,t):=
\begin{cases}
\varnothing,& s\not \leq t;\\
\{\mathbf{h}_{s,t}\},& s\leq t.
\end{cases}
\end{displaymath}
\item vertical composition of $2$-morphism is defined in the unique possible way which 
is justified by transitivity of $<$;
\item horizontal composition of $2$-morphism is defined in the unique possible
way, which is justified by admissibility of $<$.
\end{itemize}
For a field $\Bbbk$, define the $\Bbbk$-linearization $\cS_{\Bbbk}$ of $\cS$ as follows, see 
\cite[Subsection~4.3]{GM} for details:
\begin{itemize}
\item $\cS_\Bbbk$ has one object $\mathtt{i}$;
\item $1$-morphisms in $\cS_\Bbbk$ are formal finite direct sums of $1$-morphisms in $\cS$;
\item $2$-morphisms in $\cS_\Bbbk$ are appropriate matrices whose entries are 
in $\Bbbk\mathbf{h}_{s,t}$;
\item compositions in $\cS_\Bbbk$ are induced from those in $\cS$ using $\Bbbk$-bilinearity.
\end{itemize}
The $2$-category $\cS_\Bbbk$ is, by construction, $\Bbbk$-admissible. Moreover, the decategorification
$[\cS_\Bbbk]_{\oplus}$ of this $2$-category 
\begin{itemize}
\item has one object $\mathtt{i}$;
\item the endomorphism ring $[\cS_\Bbbk]_{\oplus}(\mathtt{i},\mathtt{i})$ of the object $\mathtt{i}$ 
is isomorphic to the integral semigroup ring $\mathbb{Z}[S]$. 
\end{itemize}

}
\end{example}

\subsection{Finitary $2$-categories}\label{s2.3}

A $\Bbbk$-admissible $2$-category  $\cC$ is called {\em finitary}, see \cite{MM1}, provided that
\begin{itemize}
\item it has finitely many objects;
\item it has finitely many indecomposable $1$-morphism, up to isomorphism;
\item all $\Bbbk$-spaces of $2$-morphisms are finite dimensional;
\item all identity $1$-morphisms are indecomposable.
\end{itemize}
For example, the category $\cS_\Bbbk$ constructed in Example~\ref{ex1} is finitary (by construction
and using the fact that $S$ is finite).

\subsection{Multisemigroup of a $\Bbbk$-admissible $2$-category}\label{s2.4}

Let $\cC$ be a $\Bbbk$-admissible $2$-category. Consider the set $\mathcal{S}(\cC)$
of isomorphism classes of indecomposable $1$-morphisms in $\cC$. Recall,
from Section~\ref{s1}, that setting, for two indecomposable $1$-morphisms $\mathrm{F}$ 
and $\mathrm{G}$ in $\cC$,
\begin{equation}\label{eq1}
[\mathrm{F}]*[\mathrm{G}]=\{[\mathrm{H}]\,:\,
\mathrm{H} \text{ is isomorphic to a direct summand of }\mathrm{F}\circ\mathrm{G}\},
\end{equation}
defines on $\mathcal{S}(\cC)$ the structure of a multisemigroup. For example, 
for the category $\cS_\Bbbk$ constructed in Example~\ref{ex1}, the multisemigroup
$\mathcal{S}(\cC)$ is canonically isomorphic to the semigroup $S$ (by sending
$[s]$ to $s$, for $s\in S$). In particular, in this case the multioperation defined
by \eqref{eq1} is, in fact, single-valued and thus the prefix ``multi'' is redundant.

\begin{example}\label{ex2}
{\rm 
Consider the symmetric group $S_3$ as a Coxeter group with generators $s$
(standing for the elementary transposition $(1,2)$) and $t$
(standing for the elementary transposition $(2,3)$). Then
\begin{displaymath}
S_3:=\{e,s,t,st,ts,sts\}, 
\end{displaymath}
where $s^2=t^2=e$ and $sts=tst$. Then we have the following {\em Kazhdan-Lusztig basis} in
$\mathbb{Z}[S_3]$:
\begin{gather*}
\overline{e}:=e,\quad \overline{s}:=e+s,\quad \overline{t}:=e+t, \quad
\overline{st}:=e+s+t+st,\\
\overline{ts}:=e+s+t+ts,\quad \overline{sts}:=e+s+t+ts+st+sts.
\end{gather*}
The multiplication table of the Kazhdan-Lusztig basis elements is given by:
\begin{equation}\label{eq2}
\begin{array}{c||c|c|c|c|c|c}
\cdot& \overline{e} & \overline{s} & \overline{t} & \overline{st} & \overline{ts} & \overline{sts} \\
\hline\hline
\overline{e} & \overline{e} & \overline{s} & \overline{t} & \overline{st} & \overline{ts} & \overline{sts} \\
\hline
\overline{s} & \overline{s} & 2\overline{s} & \overline{st} & 2\overline{st} & 
\overline{sts}+\overline{s} & 2\overline{sts} \\
\hline
\overline{t} & \overline{t} & \overline{ts} & 2\overline{t} & \overline{tst}+\overline{t} & 
2\overline{ts} & 2\overline{sts} \\
\hline
\overline{st} & \overline{st} & \overline{sts}+\overline{s} & 2\overline{st} & 2\overline{tst}+\overline{st} & 
2\overline{sts}+2\overline{s} & 4\overline{sts} \\
\hline
\overline{ts} & \overline{ts} & 2\overline{ts} & \overline{tst}+\overline{t} & 2\overline{tst}+2\overline{t} & 
2\overline{sts}+\overline{ts} & 4\overline{sts} \\
\hline
\overline{sts} & \overline{sts} & 2\overline{sts} & 2\overline{sts} & 4\overline{sts} & 
4\overline{sts} & 6\overline{sts} 
\end{array}
\end{equation}
Consider the $2$-category $\cS_3$ of Soergel bimodules over the coinvariant algebra of $S_3$
as detailed in, e.g., \cite[Subsection~7.1]{MM1}. Consider the corresponding 
Grothendieck category $[\cS_3]_{\oplus}$. Then the ring $[\cS_3]_{\oplus}(\mathtt{i},\mathtt{i})$
is isomorphic to $\mathbb{Z}[S_3]$ where the isomorphism sends isomorphism classes of indecomposable
$1$-morphisms in $\cS_3$ to elements of the Kazhdan-Lusztig basis. This means that 
$\mathcal{S}[\cS_3]$ can be identified with $S_3$ as a set. From \eqref{eq2} it follows that the
multioperation $*$ on $\mathcal{S}[\cS_3]$ is given by:
\begin{equation}\label{eq3}
\begin{array}{c||c|c|c|c|c|c}
\cdot& {e} & {s} & {t} & {st} & {ts} & {sts} \\
\hline\hline
{e} & \{e\} & \{s\} & \{t\} & \{st\} & \{ts\} & \{sts\} \\
\hline
{s} & \{s\} & \{s\} & \{st\} & \{st\} & 
\{sts,s\} & \{sts\} \\
\hline
{t} & \{t\} & \{ts\} & \{t\} & \{tst,t\} & 
\{ts\} & \{sts\} \\
\hline
{st} & \{st\} & \{sts,s\} & \{st\} & \{tst,st\} & 
\{sts,s\} & \{sts\} \\
\hline
{ts} & \{ts\} & \{ts\} & \{tst,t\} & \{tst,t\} & 
\{sts,ts\} & \{sts\} \\
\hline
{sts} & \{sts\} & \{sts\} & \{sts\} & \{sts\} & 
\{sts\} & \{sts\} 
\end{array} 
\end{equation}
Here we see that the multioperation $*$ is genuinely multi-valued.
} 
\end{example}

\subsection{Multisemigroup and decategorification}\label{s2.5}

Comparing \eqref{eq2} with \eqref{eq3}, it is easy to see that the information encoded
by the multisemigroup, that is \eqref{eq3}, is not enough to recover the 
``associative algebra structure'' which exists on the level of the Grothendieck decategorification
presented in \eqref{eq2}. The essential part of the information which got lost is
the exact values of non-zero multiplicities with which indecomposable $1$-morphism
appear in composition of two given indecomposable $1$-morphisms.

One can say that the situation is even worse. Let us try to use \eqref{eq3} to define
{\em some} associative algebra structure on the abelian group $\mathbb{Z}[S_2]$. The only
reasonable guess would be to define, on generators, an operation $\diamond$ as follows: 
\begin{displaymath}
x\diamond y=\sum_{z\in x*y}z
\end{displaymath}
and then extend this to $\mathbb{Z}[S_2]$ by bilinearity. However, this is not associative,
for example, $(sts\diamond st)\diamond s\neq sts\diamond(st\diamond s)$, indeed,
\begin{displaymath}
(sts\diamond st)\diamond s=sts\diamond s=sts,\qquad sts\diamond (st\diamond s)=sts\diamond (sts+s)=2sts.
\end{displaymath}
To have associativity, we should have considered $\mathbf{B}[S_2]$, where 
$\mathbf{B}$ is the Boolean semiring. This will be explained in details later.

Therefore, if we want to define some discrete object which we could use to recover 
the associative algebra structure given by the Grothendieck decategorification,
we need to keep track of multiplicities. This naturally leads to the 
notion of multisemigroups with multiplicities.

\section{Multisemigroups with multiplicities}\label{s3}

\subsection{Semirings}\label{s3.1}

A semiring is a weaker notion than that of a ring and the difference is that it is 
only required to form a commutative monoid (not a group) with respect to addition.
More precisely, a {\em unital semiring} is a tuple $(R,+,\cdot,0,1)$, where
\begin{itemize}
\item $R$ is a set;
\item $+$ and $\cdot$ are binary operations on $R$;
\item $0$ and $1$ are two different elements of $R$.
\end{itemize}
These data is required to satisfy the following axioms.
\begin{itemize}
\item $(R,+,0)$ is a commutative monoid with identity element $0$;
\item $(R,\cdot,1)$ is a monoid with identity element $1$;
\item multiplication distributes over addition both from the left and from the right;
\item $0\cdot R= R\cdot 0=0$.
\end{itemize}
We refer to \cite{Go,Ka} for more details.

Here are some examples of semirings:
\begin{itemize}
\item Any unital ring is a unital semiring.
\item $\mathbb{Z}_{\geq 0}=(\{0,1,2,3,\dots\},+,\cdot,0,1)$.
\item The Boolean semiring $\mathbf{B}=(\{0,1\},+,\cdot,0,1)$ with respect to the 
usual boolean addition and multiplication given by:
\begin{displaymath}
\begin{array}{c||c|c}
+&0&1\\
\hline\hline
0&0&1\\
\hline
1&1&1
\end{array}\qquad\text{ and }\quad
\begin{array}{c||c|c}
\cdot&0&1\\
\hline\hline
0&0&0\\
\hline
1&0&1
\end{array}
\end{displaymath}
\item The dual Boolean semiring $\mathbf{B}^{*}=(\{0,1\},\cdot,+,1,0)$ with respect to the 
boolean multiplication (as addition) and boolean addition (as multiplication).
\item If $R$ is a semiring, then the set $\mathrm{Mat}_{n\times n}(R)$ of $n\times n$ matrices 
with coefficients in $R$ forms a semiring with respect to the usual addition and multiplication of
matrices.
\item For any nonempty set $X$, we have the semiring  $\mathbf{B}_X:=(\mathbf{B}^{X},\cup,\cap,\varnothing,X)$.
This semiring is isomorphic to 
\begin{displaymath}
\prod_{x\in X} \mathbf{B}^{(x)}, 
\end{displaymath}
where $\mathbf{B}^{(x)}=\mathbf{B}$, a copy of the Boolean semiring $\mathbf{B}$ indexed by $x$.
\end{itemize}

Given two semirings $R=(R,+,\cdot,0,1)$ and $R'=(R',+',\cdot',0',1')$, a homomorphism 
$\varphi:R\to R'$ is a map from $R$ to $R'$ such that
\begin{itemize}
\item $\varphi(r+s)=\varphi(r)+'\varphi(s)$, for all $r,s\in R$;
\item $\varphi(r\cdot s)=\varphi(r)\cdot'\varphi(s)$, for all $r,s\in R$;
\item $\varphi(0)=0'$;
\item $\varphi(1)=1'$.
\end{itemize}
Semirings and homomorphisms form a category, denoted by $\mathbf{SRing}$.

\subsection{Complete semirings}\label{s3.2}

A commutative monoid $(S,+,0)$ is called {\em complete} provided that it is equipped,
for any indexing set $I$, with the sum operation $\displaystyle\sum_{i\in I}$ such that
\begin{itemize}
\item $\displaystyle \sum_{i \in \varnothing}{r_i} =0$;
\item $\displaystyle \sum_{i \in \{j\}}{r_i} = r_j$;
\item $\displaystyle \sum_{j \in J}{\sum_{i \in I_j}{r_i}} = \sum_{s \in I}r_s$ when
$\displaystyle \bigcup_{j\in J} I_j=I$ and $I_j \cap I_{j'} = \varnothing$ for $j\neq j'$.
\end{itemize}
We refer the reader to \cite{He} for more details.

A semiring $(R,+,\cdot,0,1)$ is called {\em complete} provided that
\begin{itemize}
\item $(R,+,0)$ is a complete monoid;
\item multiplication distributes over all operations $\displaystyle\sum_{i\in I}$ on both sides, that is
\begin{displaymath}
r\cdot\left(\sum_{i\in I}r_i\right)= \sum_{i\in I}(r\cdot r_i)\qquad\text{ and }\qquad
\left(\sum_{i\in I}r_i\right)\cdot r= \sum_{i\in I}(r_i\cdot r). 
\end{displaymath}
\end{itemize}

Given two complete semirings 
\begin{displaymath}
R=\left(R,+,\cdot,0,1,\sum_{i\in I}\right)\quad \text{ and }\quad 
R'=\left(R',+',\cdot',0',1',\sum'_{i\in I}\right), 
\end{displaymath}
a homomorphism $\varphi:R\to R'$ is a homomorphism of underlying semiring such that
\begin{displaymath}
\varphi\left(\sum_{i\in I}r_i\right)= \sum'_{i\in I}\varphi(r_i),\quad\text{ for all }\quad
r_i\in R. 
\end{displaymath}
Complete semirings and homomorphisms form a subcategory in $\mathbf{SRing}$, denoted by $\mathbf{CSRing}$.

Here are some examples of complete semirings:
\begin{itemize}
\item Any bounded complete join-semilattice is a complete commutative semi-ring.
\item $(\mathbf{B}^X,\cup,\cap,\varnothing,X)$, for some set $X$, where $\sum_{i\in I}$ is the usual union.
\item The set of open sets for a topological space $X$, with respect to union and intersection.
\item Unital quantales with join as addition and the underlying associative operation as multiplication.
\item Integral tropical semiring $(\mathbb{Z}_{\geq 0}\cup\{\infty\},\mathrm{max},+,\infty,0)$, where
$\sum_{i\in I}$ is just taking the supremum.
\item The semiring $(\mathbb{Z}_{\geq 0}\cup\{\infty\},+,\mathrm{min},0,\infty)$, where
the sum of infinitely many nonzero elements is set to be $\infty$.
\item The semiring $(\mathbb{R}_{\geq 0}\cup\{\infty\},+,\cdot,0,1)$, where
$\sum_{i\in I}$ is defined as the supremum over all finite partial subsums.
\item The semiring $(\mathbb{R}_{\geq 0}\cup\{\infty\},+,\cdot,0,1)$, where
any infinite sum of non-zero elements is defined to be $\infty$.
\end{itemize}

It is very tempting to add to the above the following ``example'': all cardinal numbers form a complete 
semiring with respect to the usual addition (disjoint union) and  multiplication (Cartesian product) of cardinals.
There is one problem with this ``example'', namely, 
the fact that cardinals do not form a set but, rather, a proper class.
This problem can be overcome in an artificial and non-canonical way described in the next example.
This examples is separated from the rest due to  its importance in what follows.

\begin{example}\label{ex3}
{\rm 
For a fixed cardinal $\kappa$, let $\mathrm{Card}_{\kappa}$ denote the set of all cardinals which 
are not greater than $\kappa$. Then $\mathrm{Card}_{\kappa}$ has the structure of a complete 
semiring where
\begin{itemize}
\item addition (of any number of elements) is given by disjoint union with convention 
that all cardinals greater than $\kappa$ are identified with $\kappa$;
\item multiplication is given by Cartesian product with convention that all 
cardinals greater than $\kappa$ are identified with $\kappa$.
\end{itemize}
}
\end{example}

Note that the Boolean semiring  $\mathbf{B}$ is isomorphic to $\mathrm{Card}_{1}$.

\subsection{Multisets}\label{s3.3}

Recall, see e.g. \cite[Page~1]{Au}, that a   {\em classical multiset} is a pair $(A,\mu)$, where
\begin{itemize}
\item $A$ is a set;
\item $\mu:A\to \mathbb{Z}_{\geq 0}$ is a function, called the {\em multiplicity function}.
\end{itemize}
A natural, more general, notion is that of a genuine {\em multiset}, which is a pair $(A,\mu)$, where
\begin{itemize}
\item $A$ is a set;
\item $\mu$ is the multiplicity function from $A$ to the class of all cardinals.
\end{itemize}

\subsection{Multi-Booleans}\label{s3.4}

Recall that, given a base set $X$, the {\em Boolean} $\mathcal{B}(X)=\mathbf{B}^{X}$ of $X$ is the set of all
subsets of $X$. This can be identified with the set of all functions from $X$ to the Boolean semiring
$\mathbf{B}$. In this way, $\mathcal{B}(X)$ gets the natural structure of a complete semiring
with respect to the union and intersection of subsets. The additive identity is the empty subset while
the multiplicative identity is $X$. Note that we can also consider the {\em dual Boolean} 
of $X$ which is  the set of all functions from $X$ to the dual Boolean semiring
$\mathbf{B}^*$. This gets the natural structure of a complete semiring
with respect to the intersection and union of subsets. The additive identity is $X$ while
the multiplicative identity is the empty subset.

The above point of view allows us to generalize the definition of the Boolean to multiset structures.
Given a base set $X$, define the {\em full multi-Boolean}  of $X$ is the class
of all functions from $X$ to the class of all cardinal numbers. To create any sensible theory,
we need sets. This motivates the following definition.

Given a base set $X$ and a cardinal number $\kappa\geq$, define the {\em $\kappa$-multi-Boolean} 
$\mathcal{B}_{\kappa}(X)$ of $X$ is the set of all functions from $X$ to 
the complete semiring $\mathrm{Card}_{\kappa}$. By construction, 
$\mathcal{B}_{\kappa}(X)$ is equipped with the natural structure of a complete semiring.
Also, we have $\mathcal{B}(X)=\mathcal{B}_1(X)$. 

Clearly $\kappa=0$ would give us a singelton, on which no semi-ring structure exists. From now on we agree that any cardinal $\kappa$ in this paper is greater than or equal to 1.

Unfortunately, for $\kappa>1$, the natural complete semiring structure on $\mathcal{B}_{\kappa}(X)$
does not correspond to the usual set-theoretic notions of union and intersection of 
multisets. Indeed, the multiplicity analogue of the intersection of multisets is the arithmetic
operation of taking minimum, while the multiplicity analogue of the union of multisets is 
the arithmetic operation of taking maximum. These differ from the usual addition and multiplication
in $\mathrm{Card}_{\kappa}$, if $\kappa>1$.

\subsection{Multisemigroups with multiplicity}\label{s3.5}

Now we are ready to present our main definition. Let  $\kappa$ be a fixed cardinal. 
A {\em multisemigroup with multiplicities bounded by $\kappa$} is a pair
$(S,\mu)$, where 
\begin{itemize}
\item $S$ is a non-empty set;
\item $\mu:S\times S\to \mathcal{B}_{\kappa}(S)$, written $(s,t)\mapsto \mu_{s,t}:S\to \mathrm{Card}_{\kappa}$;
\end{itemize}
such that the following {\em distributivity} requirement is satisfied: for all $r,s,t\in R$, we have
\begin{equation}\label{eq5}
\sum_{i\in S}\mu_{s,t}(i)\mu_{r,i} =\sum_{j\in S}\mu_{r,s}(j)\mu_{j,t}.
\end{equation}
We note that here, for a cardinal $\lambda$ and a function $\nu:S\to \mathrm{Card}_{\kappa}$, by
$\lambda\nu$ we mean the function from $S$ to $\mathrm{Card}_{\kappa}$ defined as
\begin{displaymath}
\lambda \nu=\sum_{i\in\lambda}\nu,
\end{displaymath}
or, in other words, this is just adding up $\lambda$ copies of $\nu$.

The informal explanation for the requirement \eqref{eq5} is as follows: the left hand side corresponds
to the ``product'' $r*(s*t)$. Here $s*t$ gives $\mu_{s,t}$, which counts every $i\in S$ with multiplicity
$\mu_{s,t}(i)$. The result of $r*(s*t)$, written when we distribute $r*$ over all such $i\in S$ and taking
multiplicities into account, gives exactly the left hand side in \eqref{eq5}. Similarly, the right hand
side corresponds to the ``product'' $(r*s)*t$.

If $\kappa$ is clear from the context, we will sometimes use the shorthand 
{\em multi-multi\-semi\-group} instead of ``multisemigroup with multiplicities bounded by $\kappa$''.

Here are some easy examples of multisemigroup with multiplicities:
\begin{itemize}
\item A usual multisemigroup is a multisemigroup with multiplicities bounded by one.
\item For any $\kappa$ and any $\lambda<\kappa$, the set $\{a\}$ has the 
structure of a multisemigroup with multiplicities bounded by $\kappa$, if we
set $\mu_{a,a}=\lambda$. Moreover, these exhaust all such structures on $\{a\}$.
\end{itemize}

Here is a more involved example:

\begin{example}\label{ex4}
{\rm  
Let $A$ be a finite dimensional $\mathbb{R}$-algebra with a fixed basis
$\{a_i\,:\,i\in I\}$ such that $\displaystyle a_i\cdot a_j=\sum_{s\in I} \mu^s_{i,j}a_s$
and all $\mu^s_{i,j}\in\mathbb{Z}_{\geq 0}$. Then $(I,\mu)$, where we define
$\mu_{i,j}(s):=\mu^s_{i,j}$, is a multisemigroup with multiplicities bounded by $\omega$,
the first infinite cardinal. This follows from the associativity of multiplication in 
$A$ via the computation
\begin{displaymath}
\sum_{s}\sum_t \mu^t_{i,j}\mu^s_{t,k}a_s =
(a_i\cdot a_j)\cdot a_k=
a_i\cdot (a_j\cdot a_k)=
\sum_{a}\sum_b \mu^a_{i,b}\mu^b_{j,k}a_a,
\end{displaymath}
which is equivalent to \eqref{eq5} in this case since basis elements are linearly independent.
}
\end{example}

Let $(S,\mu)$ and $(S',\mu')$ be two multisemigroups with multiplicities bounded by $\kappa$. We
will say that they are {\em isomorphic} provided that there is a bijection $\varphi:S\to S'$
such that $\mu'=\varphi\circ\mu$.

Let $(S,\mu)$ be a multisemigroups with multiplicities bounded by $\kappa$.
Let $S^\diamond$ denote the set of all words in the alphabet $S$ of length at least two. 
Define the map 
\begin{displaymath}
\overline{\mu}:S^\diamond\to \mathcal{B}_{\kappa}(S)
\end{displaymath}
recursively as follows:
\begin{enumerate}
\item $\overline{\mu}_{st}=\mu_{s,t}$, if $s,t\in S$;
\item if $w=sx$, where $x$ has length at least two, then set
\begin{equation}\label{eq5-1}
\overline{\mu}_w(t):=\sum_{a\in S} \overline{\mu}_x(a)\mu_{s,a}(t).
\end{equation}
\end{enumerate}

The definition of $\overline{\mu}$ does not really depend on our choice of prefix above
(in contrast to suffix), as follows from the following statement.

\begin{proposition}\label{prop17}
If $w\in S^\diamond$ has the form $w=xs$, where $x$ has length at least two, then 
\begin{equation}\label{eq5-2}
\overline{\mu}_w(t):=\sum_{a\in S} \overline{\mu}_x(a)\mu_{a,s}(t).
\end{equation} 
\end{proposition}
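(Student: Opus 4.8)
The plan is to prove this by induction on the length of the word $w$. The base case is $w$ of length three, say $w = xs$ with $x = uv$ of length two; here $\overline{\mu}_x = \mu_{u,v}$ and the claim becomes $\overline{\mu}_{uvs}(t) = \sum_{a\in S}\mu_{u,v}(a)\mu_{a,s}(t)$, whereas the recursive definition \eqref{eq5-1} gives $\overline{\mu}_{uvs}(t) = \sum_{a\in S}\overline{\mu}_{vs}(a)\mu_{u,a}(t) = \sum_{a\in S}\mu_{v,s}(a)\mu_{u,a}(t)$. These two expressions agree precisely by the distributivity axiom \eqref{eq5} applied to the triple $(u,v,s)$ (evaluated at $t$), since $\sum_{j\in S}\mu_{u,v}(j)\mu_{j,s}(t)$ equals $\sum_{i\in S}\mu_{v,s}(i)\mu_{u,i}(t)$.

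For the inductive step, suppose the statement holds for all words of length less than $n$, and let $w = xs$ have length $n \geq 4$, with $x$ of length at least two. Write $x = uy$, where $u \in S$ and $y$ is a word of length at least two (so $y$ has length $n-2 \geq 2$, unless $x$ has length two, which is the base case). Then $w = u(ys)$, and since $ys$ has length at least three we may apply the prefix definition \eqref{eq5-1}:
\begin{displaymath}
\overline{\mu}_w(t) = \sum_{a\in S}\overline{\mu}_{ys}(a)\,\mu_{u,a}(t).
\end{displaymath}
Now $ys$ is a word of length less than $n$ ending in $s$, with $y$ of length at least two, so the induction hypothesis gives $\overline{\mu}_{ys}(a) = \sum_{b\in S}\overline{\mu}_y(b)\,\mu_{b,s}(a)$. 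Substituting and interchanging the order of summation (legitimate in a complete semiring), we obtain
\begin{displaymath}
\overline{\mu}_w(t) = \sum_{b\in S}\overline{\mu}_y(b)\sum_{a\in S}\mu_{b,s}(a)\,\mu_{u,a}(t).
\end{displaymath}
The inner sum is, by the distributivity axiom \eqref{eq5} applied to the triple $(u,b,s)$ and evaluated at $t$, equal to $\sum_{c\in S}\mu_{u,b}(c)\,\mu_{c,s}(t)$. Plugging this in and swapping sums again yields $\overline{\mu}_w(t) = \sum_{c\in S}\bigl(\sum_{b\in S}\overline{\mu}_y(b)\,\mu_{u,b}(c)\bigr)\mu_{c,s}(t)$, and the bracketed expression is exactly $\overline{\mu}_{uy}(c) = \overline{\mu}_x(c)$ by \eqref{eq5-1}. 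This gives $\overline{\mu}_w(t) = \sum_{c\in S}\overline{\mu}_x(c)\,\mu_{c,s}(t)$, as required.

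The main obstacle, such as it is, is bookkeeping with the iterated infinite sums: one must be careful that all rearrangements and interchanges of summation order are justified, but this is exactly what the completeness axioms for the semiring $\mathrm{Card}_{\kappa}$ (partition-associativity of $\sum$) and the two-sided distributivity of multiplication over arbitrary sums guarantee, so no convergence or cardinality subtleties actually arise. A secondary point worth stating explicitly is that the case distinction in the inductive step (whether $x$ has length exactly two or more) is handled uniformly once the base case is separated out, since $\overline{\mu}_y$ is well-defined for any $y$ of length at least two and the recursion \eqref{eq5-1} applies whenever the suffix has length at least two.
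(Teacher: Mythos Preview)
Your proof is correct and is essentially the same argument as the paper's, just organized as a formal induction rather than as a direct manipulation of fully expanded iterated sums. The paper unrolls the prefix recursion \eqref{eq5-1} to the closed form \eqref{eq5-3} and the suffix recursion to \eqref{eq5-4}, then appeals to ``repeated application of \eqref{eq5}'' to pass from one to the other; your induction makes that repeated application explicit, doing one instance of \eqref{eq5} per step and packaging the remaining rewrites into the induction hypothesis, which is arguably cleaner.
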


\begin{proof}
Let $w=s_1s_2\dots s_k$, where $k\geq 3$. Then the recursive procedure in
\eqref{eq5-1} results in
\begin{equation}\label{eq5-3}
\sum_{i_1\in S}\sum_{i_2\in S}\cdots \sum_{i_{k-2}\in S} 
\mu_{s_{1},i_{1}}(t)\mu_{s_{2},i_{2}}(i_{1})
\cdots\mu_{s_{k-2},i_{k-2}}(i_{k-3})\mu_{s_{k-1},s_k}(i_{k-2}) .
\end{equation}
The recursive procedure in
\eqref{eq5-2} results in
\begin{equation}\label{eq5-4}
\sum_{j_1\in S}\sum_{j_2\in S}\cdots \sum_{j_{k-2}\in S} 
\mu_{s_{1},s_{2}}(j_1)\mu_{j_{1},s_{3}}(j_{2})
\cdots\mu_{j_{k-3},s_{k-1}}(j_{k-2})\mu_{j_{k-2},s_k}(t) .
\end{equation}
The expression \eqref{eq5-3} it transferred to \eqref{eq5-4} using a repetitive 
application of \eqref{eq5}. The claim follows. 
\end{proof}

\subsection{Finitary multisemigroups with multiplicities}\label{s3.6}

We will say that a multisemigroup $(S,\mu)$ with multiplicities bounded by $\kappa$
is {\em finitary} provided that 
\begin{itemize}
\item $\kappa=\aleph_0$;
\item $\mu_{r,s}(t)\neq \aleph_0$ for all $r,s,t\in S$;
\item $|\{t\in S\,:\, \mu_{r,s}(t)\neq 0\}|<\aleph_0$ for all $r,s\in S$. 
\end{itemize}

\subsection{Multi-multisemigroup of a $\Bbbk$-admissible $2$-category}\label{s3.7}

Let $\cC$ be a $\Bbbk$-ad\-mis\-si\-ble $2$-category. Consider the set $\mathcal{S}(\cC)$
of isomorphism classes of indecomposable $1$-morphisms in $\cC$. For $F,G,H\in \mathcal{S}(\cC)$,
define $\mu_{F,G}(H)$ to be the multiplicity of $H$ as a direct summand in the composition $F\circ G$.

\begin{theorem}\label{prop7}
The construct $(\mathcal{S}(\cC),\mu)$ is a finitary multisemigroup with multiplicities. 
\end{theorem}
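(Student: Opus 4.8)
The plan is to verify the three finitary conditions and then the distributivity axiom \eqref{eq5}, using only standard facts about Krull--Schmidt categories. First I would observe that $\kappa = \aleph_0$ is forced by the definition of the multi-multisemigroup we are constructing, so the first bullet of finitariness is automatic. For the second bullet, since $\cC$ is $\Bbbk$-admissible, each category $\cC(\mathtt{i},\mathtt{j})$ is Krull--Schmidt, so for any $1$-morphism $\mathrm{F}\circ\mathrm{G}$ the decomposition into indecomposables is finite; hence every multiplicity $\mu_{F,G}(H)$ is a non-negative integer, in particular not equal to $\aleph_0$, and moreover only finitely many $H\in\mathcal{S}(\cC)$ occur with nonzero multiplicity, which gives the third bullet. (One should note that $F\circ G$ need not be indecomposable, but it is still an object of a Krull--Schmidt category, so this is fine.)

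The substantive point is the distributivity axiom \eqref{eq5}, which must be shown to follow from associativity of horizontal composition in $\cC$ together with the Krull--Schmidt property. The idea is that associativity gives a canonical isomorphism $(\mathrm{R}\circ\mathrm{S})\circ\mathrm{T}\cong\mathrm{R}\circ(\mathrm{S}\circ\mathrm{T})$ of $1$-morphisms. Decomposing $\mathrm{S}\circ\mathrm{T}\cong\bigoplus_{i} \mathrm{I}^{\oplus \mu_{S,T}(i)}$ into indecomposables (with $i$ running over a set of representatives of isomorphism classes), additivity/biadditivity of $\circ$ yields $\mathrm{R}\circ(\mathrm{S}\circ\mathrm{T})\cong\bigoplus_i (\mathrm{R}\circ \mathrm{I})^{\oplus\mu_{S,T}(i)}$, and decomposing each $\mathrm{R}\circ\mathrm{I}$ further shows that the multiplicity of a given indecomposable $\mathrm{H}$ in $(\mathrm{R}\circ\mathrm{S})\circ\mathrm{T}$ is $\sum_{i\in\mathcal{S}(\cC)}\mu_{S,T}(i)\,\mu_{R,i}(H)$. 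Doing the symmetric computation on the other side, $(\mathrm{R}\circ\mathrm{S})\circ\mathrm{T}\cong\bigoplus_{j}(\mathrm{J}\circ\mathrm{T})^{\oplus\mu_{R,S}(j)}$, gives multiplicity $\sum_{j\in\mathcal{S}(\cC)}\mu_{R,S}(j)\,\mu_{j,T}(H)$. Since both count the multiplicity of the same $\mathrm{H}$ in isomorphic $1$-morphisms, and multiplicities in a Krull--Schmidt category are isomorphism invariants, the two sums agree for every $H$, which is exactly the equality of functions \eqref{eq5}. The sums here are finite by the finitariness just established, so there are no convergence subtleties, and the cardinal arithmetic in $\mathrm{Card}_{\aleph_0}$ restricts to ordinary arithmetic of non-negative integers.

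The main obstacle I anticipate is purely bookkeeping rather than conceptual: one must be careful that multiplicities in a Krull--Schmidt category are well-defined (independent of the chosen decomposition) and behave additively under $\oplus$ and biadditively under $\circ$, and one must handle the fact that $F\circ G$ itself need not be indecomposable so that $\mu$ really records a decomposition of a possibly decomposable object. I would state these as a short lemma (or cite the Krull--Schmidt property directly) and then the argument above goes through essentially formally. A secondary minor point is to make the identification of the double sum over isomorphism classes with the recursive expressions of the type appearing in Proposition~\ref{prop17}; but for the bare statement of the theorem only the single identity \eqref{eq5} is needed, so this can be deferred.
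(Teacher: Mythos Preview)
Your proposal is correct and follows essentially the same route as the paper's proof: the paper also dispenses with the finitary conditions as immediate consequences of $\Bbbk$-admissibility (Krull--Schmidt), and then derives \eqref{eq5} by computing the multiplicity of an indecomposable $K$ in $(F\circ G)\circ H$ versus $F\circ(G\circ H)$ via the two respective decompositions and invoking associativity together with uniqueness of Krull--Schmidt decompositions. Your treatment is somewhat more explicit about the intermediate steps (biadditivity of $\circ$, well-definedness of multiplicities), but there is no genuine difference in strategy.
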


\begin{proof}
We only have to check \eqref{eq5} in this case, as the rest follows by construction from 
$\Bbbk$-admissibility of $\cC$. For $F,G,H,K\in \mathcal{S}(\cC)$, the multiplicity of
$K$ in $(F\circ G)\circ H$ is given by 
\begin{displaymath}
\sum_{Q\in \mathcal{S}(\ccC)} \mu_{F,G}(Q)\mu_{Q,H}(K).
\end{displaymath}
In turn, the multiplicity of $K$ in $F\circ (G\circ H)$ is given by 
\begin{displaymath}
\sum_{P\in \mathcal{S}(\ccC)} \mu_{F,P}(K)\mu_{G,H}(P).
\end{displaymath}
As $(F\circ G)\circ H\cong F\circ (G\circ H)$ and $\mathcal{S}(\cC)$ is Krull-Schmidt, we have
\begin{displaymath}
\sum_{Q\in \mathcal{S}(\ccC)} \mu_{F,G}(Q)\mu_{Q,H}(K)=
\sum_{P\in \mathcal{S}(\ccC)} \mu_{F,P}(K)\mu_{G,H}(P),
\end{displaymath}
which proves \eqref{eq5} in this case.
\end{proof}

\begin{example}\label{ex8}
{\rm 
For the $2$-category $\cS_3$ in Example~\ref{ex2}, the multi-multisemigroup structure on
$\mathcal{S}(\cC)$ is fully determined by \eqref{eq2}. For instance, the function
$\mu_{\overline{st},\overline{st}}$ has the following values:
\begin{displaymath}
\begin{array}{c||c|c|c|c|c|c}
x:&\overline{e}&\overline{s}&\overline{t}&\overline{st}&\overline{ts}&\overline{sts}\\
\hline\hline
\mu_{\overline{st},\overline{st}}(x):&
0&0&0&1&0&2.
\end{array} 
\end{displaymath}
The function $\mu_{\overline{ts},\overline{sts}}$ has the following values:
\begin{displaymath}
\begin{array}{c||c|c|c|c|c|c}
x:&\overline{e}&\overline{s}&\overline{t}&\overline{st}&\overline{ts}&\overline{sts}\\
\hline\hline
\mu_{\overline{st},\overline{st}}(x):&
0&0&0&0&0&4.
\end{array} 
\end{displaymath}
} 
\end{example}

\section{Multi-multisemigroup vs multisemigroup and decategorification}\label{s4}

\subsection{Multi-multisemigroup vs multisemigroup}\label{s4.1}

Consider the canonical surjective semiring homomorphism 
$\Phi:\mathrm{Card}_{\omega}\tto \mathrm{Card}_{1}\cong \mathbf{B}$ defined by
\begin{displaymath}
\Phi(x)=
\begin{cases}
0,& x=0;\\
1,& \text{ otherwise}.
\end{cases}
\end{displaymath}
As usual, we identify subsets in a set $X$ with $\mathbf{B}^X$.
The following proposition says that the multi-multisemigroup of $\cC$ has enough information to
recover the multisemigroup of $\cC$.

\begin{proposition}\label{prop11}
Let $\cC$ be a $\Bbbk$-admissible $2$-category. Then, for any $[F],[G]\in\mathcal{S}(\cC)$, we have
\begin{displaymath}
[F]*[G]=\Phi\circ \mu_{F,G}. 
\end{displaymath}
\end{proposition}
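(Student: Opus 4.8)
The plan is to unwind both sides of the claimed equality at the level of the underlying sets/functions and observe that they agree pointwise. Recall that $\mu_{F,G}$ is the function in $\mathcal{B}_\omega(\mathcal{S}(\cC))$ sending $[H]$ to the multiplicity of $H$ as a direct summand of $F\circ G$, and that $[F]*[G]$ is, by \eqref{eq1}, the subset of $\mathcal{S}(\cC)$ consisting of those $[H]$ that occur as a direct summand of $F\circ G$. Under the identification of subsets of $\mathcal{S}(\cC)$ with elements of $\mathbf{B}^{\mathcal{S}(\cC)}$, the set $[F]*[G]$ is exactly the indicator function $\chi$ of $\{[H] : H \text{ is a summand of } F\circ G\}$.

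First I would note that $\Phi\circ\mu_{F,G}$ is, by definition of $\Phi$, the function on $\mathcal{S}(\cC)$ that takes value $1$ precisely at those $[H]$ with $\mu_{F,G}(H)\neq 0$ and value $0$ elsewhere. Then I would observe that, in a Krull--Schmidt category, an indecomposable object $H$ is isomorphic to a direct summand of $F\circ G$ if and only if its multiplicity in the (essentially unique) direct sum decomposition of $F\circ G$ into indecomposables is nonzero; that is, $\mu_{F,G}(H)\neq 0$. Hence the support of $\mu_{F,G}$ coincides with the set $[F]*[G]$, which gives $\Phi\circ\mu_{F,G} = \chi = [F]*[G]$ as elements of $\mathbf{B}^{\mathcal{S}(\cC)}$.

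The argument is essentially a definition-chase, so there is no serious obstacle; the one point requiring a word of justification is the equivalence ``$H$ is a summand of $F\circ G$'' $\iff$ ``$\mu_{F,G}(H)\neq 0$'', which is exactly where Krull--Schmidt (uniqueness of decomposition into indecomposables, so that multiplicities are well defined and detect summands) enters, and this is already built into the hypotheses via $\Bbbk$-admissibility of $\cC$. I would also remark in passing that $\mu_{F,G}$ indeed lands in $\mathcal{B}_\omega(\mathcal{S}(\cC))$ and that $[F]*[G]$ indeed lands in $\mathcal{B}_1(\mathcal{S}(\cC)) = \mathcal{B}(\mathcal{S}(\cC))$, so that applying $\Phi$ pointwise is exactly the comparison being made; these facts are guaranteed by Theorem~\ref{prop7} and by the construction in Section~\ref{s2.4} respectively. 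Thus the proof reduces to assembling these observations into one line.
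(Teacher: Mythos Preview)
Your proposal is correct and is exactly the definition-chase that the paper has in mind; the paper's own proof is the single sentence ``This follows directly from the definitions,'' and you have simply spelled out those definitions carefully. There is nothing to add or change.
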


\begin{proof}
This follows directly from the definitions. 
\end{proof}

\subsection{The algebra of a finitary multi-multisemigroup}\label{s4.2}

Let $(S,\mu)$ be a finitary multi-multisemigroup. For a fixed commutative unital ring $\Bbbk$, 
consider the free $\Bbbk$-module  $\Bbbk[S]$ with basis $S$. Define on $\Bbbk[S]$ a $\Bbbk$-bilinear 
binary multiplication $\cdot$ by setting, for $s,t\in S$,
\begin{equation}\label{eq9}
s\cdot t:=\sum_{r\in S} \mu_{s,t}(r)r. 
\end{equation}

\begin{proposition}\label{prop12}
The construct $(\Bbbk[S],\cdot)$ is an associative $\Bbbk$-algebra.
\end{proposition}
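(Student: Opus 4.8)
The plan is to verify associativity of $\cdot$ on basis elements and then extend by bilinearity. First I would fix $s,t,u\in S$ and compute $(s\cdot t)\cdot u$ directly from \eqref{eq9}: expanding $s\cdot t=\sum_{r\in S}\mu_{s,t}(r)r$ and then multiplying on the right by $u$, bilinearity together with \eqref{eq9} gives
\begin{displaymath}
(s\cdot t)\cdot u=\sum_{r\in S}\mu_{s,t}(r)\sum_{v\in S}\mu_{r,u}(v)v
=\sum_{v\in S}\left(\sum_{r\in S}\mu_{s,t}(r)\mu_{r,u}(v)\right)v.
\end{displaymath}
Symmetrically, $s\cdot(t\cdot u)=\sum_{v\in S}\left(\sum_{r\in S}\mu_{t,u}(r)\mu_{s,r}(v)\right)v$. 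Comparing coefficients of each basis element $v$, associativity is exactly the assertion that $\sum_{r\in S}\mu_{s,t}(r)\mu_{r,u}(v)=\sum_{r\in S}\mu_{t,u}(r)\mu_{s,r}(v)$ for all $v$, which is precisely the distributivity axiom \eqref{eq5} (with the roles of the indices matched up appropriately, reading off the value at $v$).

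The one genuine subtlety to address is that \eqref{eq5} is an identity of infinite sums in the complete semiring $\mathrm{Card}_{\kappa}$ (with $\kappa=\aleph_0$ here), whereas the sums in $\Bbbk[S]$ must be finite sums in a $\Bbbk$-module. This is where the finitary hypothesis does the work: for fixed $s,t$ the set $\{r\in S:\mu_{s,t}(r)\neq 0\}$ is finite and each $\mu_{s,t}(r)$ is a genuine natural number (not $\aleph_0$), so $s\cdot t$ is a well-defined element of $\Bbbk[S]$; iterating, the coefficient $\sum_{r\in S}\mu_{s,t}(r)\mu_{r,u}(v)$ is a finite sum of natural numbers for each $v$, and only finitely many $v$ give a nonzero value. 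So I would first note these finiteness facts to legitimize the computation in $\Bbbk[S]$, then observe that the resulting natural-number identity is obtained from \eqref{eq5} by applying the semiring homomorphism $\mathrm{Card}_{\aleph_0}\to\mathbb{Z}_{\geq 0}$ that is the identity on finite cardinals (the relevant sums all avoiding $\aleph_0$ by finitarity), and finally push the identity into $\Bbbk$ via the unique unital ring homomorphism $\mathbb{Z}_{\geq 0}\to\Bbbk$.

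The only step I expect to require any care is bookkeeping the index-matching between \eqref{eq5} and the coefficient identity above — that is, checking that the left side of \eqref{eq5}, namely $\sum_{i\in S}\mu_{s,t}(i)\mu_{r,i}$, when evaluated at $v$ gives $\sum_{i}\mu_{s,t}(i)\mu_{r,i}(v)$, which matches the $(s\cdot t)\cdot u$ coefficient after relabelling $(r,s,t)\mapsto(s,t,u)$ in the axiom. Everything else is routine: distributivity of $\cdot$ over $+$ and compatibility with scalars hold by construction since $\cdot$ was defined to be $\Bbbk$-bilinear, and the element $\sum_{r\in S}\mu_{r,r}(r')r'$-type unit questions are not part of this statement (associativity only). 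I would therefore present the proof as: (i) finitarity makes $\cdot$ well-defined; (ii) expand both triple products by bilinearity and \eqref{eq9}; (iii) match coefficients and invoke \eqref{eq5}.
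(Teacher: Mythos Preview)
Your proposal is correct and follows exactly the paper's approach: expand both triple products via \eqref{eq9} and $\Bbbk$-bilinearity, then match coefficients to reduce associativity to axiom \eqref{eq5}. Your additional remarks on finitarity ensuring well-definedness and on transporting the identity from $\mathrm{Card}_{\aleph_0}$ to $\Bbbk$ are more explicit than the paper (which leaves these implicit), but the underlying argument is the same.
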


\begin{proof}
We need to show that $(r\cdot s)\cdot t=r\cdot (s\cdot t)$, for all $r,s,t\in S$. Using \eqref{eq9} 
and $\Bbbk$-bilinearity of $\cdot$, this reduces exactly to the axiom \eqref{eq5}.
\end{proof}

\subsection{Grothendieck ring of a $\Bbbk$-admissible $2$-category}\label{s4.3}

Let $\cC$ be a small $\Bbbk$-admissible $2$-category. The {\em Grothendieck ring} $Gr(\cC)$ of $\cC$
is defined as follows:
\begin{itemize}
\item elements of $Gr(\cC)$ are elements in the free abelian group generated by
isomorphism classes of indecomposable $1$-morphisms;
\item addition in $Gr(\cC)$ is the obvious addition inside the free abelian group;
\item multiplication in $Gr(\cC)$ is induced from composition in $\cC$ using biadditivity.
\end{itemize}
The ring $Gr(\cC)$ is unital if and only if $\cC$ has finitely many objects. Otherwise
it is {\em locally unital}, where local units correspond to (summands of) the identity 
$1$-morphisms in $\cC$.

An alternative way to look at $Gr(\cC)$ is to understand it as the ring associated 
with the preadditive category $[\cC]_{\oplus}$ in the obvious way. Conversely, 
$[\cC]_{\oplus}$ is the variation of the ring $Gr(\cC)$ which has several objects, cf. \cite{Mi}.

\subsection{Multi-multisemigroup vs decategorification}\label{s4.4}

Our main observation in this subsection is the following connection between the
multi-multisemigroup of a finitary $2$-category and the Grothendieck ring of this
category.

\begin{proposition}\label{prop14}
Let $\cC$ be a finitary $2$-category and $\Bbbk$ a field. Then there is a
canonical isomorphism of $\Bbbk$-algebras,
\begin{displaymath}
\Bbbk\otimes_{\mathbb{Z}}Gr(\cC)\cong  \Bbbk[\mathcal{S}(\cC)].
\end{displaymath}
\end{proposition}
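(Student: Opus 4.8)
The plan is to exhibit the isomorphism explicitly on the natural bases of both sides and then check that it respects multiplication. First I would recall that, since $\cC$ is finitary, the set $\mathcal{S}(\cC)$ of isomorphism classes of indecomposable $1$-morphisms is finite, so $Gr(\cC)$ is the free abelian group on $\mathcal{S}(\cC)$, hence $\Bbbk\otimes_{\mathbb{Z}}Gr(\cC)$ is the free $\Bbbk$-module with basis $\{1\otimes[F]\,:\,[F]\in\mathcal{S}(\cC)\}$. On the other side, $\Bbbk[\mathcal{S}(\cC)]$ is by definition (see Subsection~\ref{s4.2}) the free $\Bbbk$-module on $\mathcal{S}(\cC)$. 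The obvious $\Bbbk$-linear map $\psi$ sending $1\otimes[F]$ to $F$ is then a $\Bbbk$-module isomorphism; note also that Theorem~\ref{prop7} guarantees $(\mathcal{S}(\cC),\mu)$ is a finitary multi-multisemigroup, so that $\Bbbk[\mathcal{S}(\cC)]$ is defined and, by Proposition~\ref{prop12}, is an associative $\Bbbk$-algebra.

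Next I would verify that $\psi$ is a ring homomorphism. Multiplication in $Gr(\cC)$ is induced from horizontal composition in $\cC$, so for indecomposables $F,G$ the product $[F]\cdot[G]$ in $Gr(\cC)$ equals the class $[F\circ G]$ in the split Grothendieck group; since $\cC(\mathtt{i},\mathtt{j})$ is Krull--Schmidt, $F\circ G$ decomposes uniquely (up to isomorphism and reordering) into indecomposables, and the class $[F\circ G]$ is precisely $\sum_{[H]\in\mathcal{S}(\cC)}\mu_{F,G}(H)\,[H]$, where $\mu_{F,G}(H)$ is the multiplicity of $H$ in $F\circ G$ by definition of $\mu$ in Subsection~\ref{s3.7}. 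Applying $\psi$ gives $\sum_{H}\mu_{F,G}(H)\,H$, which is exactly $F\cdot G$ in $\Bbbk[\mathcal{S}(\cC)]$ by the defining formula~\eqref{eq9}. Extending by $\Bbbk$-bilinearity, $\psi(xy)=\psi(x)\psi(y)$ for all $x,y$. The unit of $Gr(\cC)$ (assuming, as is implicit in ``isomorphism of $\Bbbk$-algebras'', that $\cC$ has finitely many objects so that $Gr(\cC)$ is unital) is the sum of the classes of the identity $1$-morphisms, which are indecomposable by finitariness; $\psi$ sends this to the corresponding element of $\Bbbk[\mathcal{S}(\cC)]$, and one checks it acts as a two-sided identity there using~\eqref{eq9} and the fact that identity $1$-morphisms compose as identities.

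There is essentially no hard step here: the content is bookkeeping, matching the two constructions. The one point that deserves a sentence of care is the identification of the multiplication on $Gr(\cC)$ with the multi-multisemigroup structure, i.e.\ that passing to the split Grothendieck group genuinely records the multiplicities $\mu_{F,G}(H)$ and that these are the same $\mu$ used to build $\Bbbk[\mathcal{S}(\cC)]$ --- but this is immediate from the definitions in Subsections~\ref{s3.7} and~\ref{s4.2}. Finally I would remark that the isomorphism is canonical in that it does not depend on any choices beyond the canonical bases of indecomposables, which justifies the word ``canonical'' in the statement. For the locally unital case (infinitely many objects) the same $\psi$ works verbatim as an isomorphism of locally unital $\Bbbk$-algebras, though under the standing finitary hypothesis $\cC$ has finitely many objects anyway.
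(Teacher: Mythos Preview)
Your proposal is correct and follows essentially the same approach as the paper: define the $\Bbbk$-linear map sending $1\otimes[F]$ to $F$, observe it is bijective since both sides are free $\Bbbk$-modules on $\mathcal{S}(\cC)$, and verify it is a ring homomorphism because the structure constants on both sides coincide with $\mu_{F,G}(H)$. You supply more detail (Krull--Schmidt decomposition, treatment of the unit, the locally unital remark) than the paper does, but the argument is the same.
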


\begin{proof}
We define the map $\psi: \Bbbk\otimes_{\mathbb{Z}}Gr(\cC)\to \Bbbk[\mathcal{S}(\cC)]$ as the
$\Bbbk$-linear extension of the map which sends an isomorphism class of indecomposable 
$1$-morphisms in $\cC$ to itself. This map is, clearly, bijective. Moreover, it is a homomorphism
of rings since, on both sides, the structure constants with respect to the $\Bbbk$-basis, consisting
of  isomorphism class of indecomposable  $1$-morphisms in $\cC$, are given by non-negative integers
$\mu_{F,G}(H)$ as defined in Subsection~\ref{s3.5}. The claim of the proposition follows.
\end{proof}

Altogether, for a finitary $2$-category $\cC$, we have the following picture
\begin{displaymath}
\xymatrix{ 
*+[F]{(\mathcal{S}(\cC),\mu)}\ar@/_/[rrrd]\ar[rrrrrr]&&&&&&*+[F]{(\mathcal{S}(\cC),*)}\\\ 
&&&*+[F]{Gr(\cC)}\ar[rrru]\ar@{..>}@/_/[lllu]&&& 
} 
\end{displaymath}
where arrow show in which direction we can recover information.

\section{Some explicit examples of multi-multisemigroups of finitary $2$-categories}\label{s5}

\subsection{Projective functors for finite dimensional algebras}\label{s5.1}

Let $\Bbbk$ be an algebraically closed field and $A$ a connected, basic, non-semi-simple, finite dimensional,
unital $\Bbbk$-algebra. Let $\mathcal{C}$ be a small category equivalent to $A$-mod. Following
\cite[Subsection~7.3]{MM1}, we define the $2$-category $\cC_A$ as a subcategory
in $\mathbf{Cat}$ (not full) such that:
\begin{itemize}
\item $\cC_A$ has one object $\mathtt{i}$, which we identify with $\mathcal{C}$;
\item $1$-morphisms in $\cC_A$ are functors isomorphic to direct sums of the identity functors
and functors of tensoring with projective $A$-bimodules;
\item $2$-morphisms in $\cC_A$ are natural transformations of functors.
\end{itemize}
Note that all $1$-morphisms in $\cC_A$ are, up to isomorphism, functors of tensoring with 
$A$-bimodules. For simplicity we will just use certain bimodules to denote the corresponding 
isomorphism classes of $1$-morphisms.

Let $1=e_1+e_2+\dots+e_k$ be a decomposition of $1\in A$ into a sum of primitive, pairwise
orthogonal idempotents. Then indecomposable $1$-morphisms in $\cC_A$ correspond to bimodule
\begin{displaymath}
\mathbbm{1}:=A,\qquad F_{i,j}:=Ae_i\otimes_{\Bbbk}e_jA,\,\,\text{ where }\,\, i,j=1,2,\dots,k. 
\end{displaymath}
The essential part of the composition in $\cC_A$ is given by
\begin{displaymath}
F_{i,j}\circ F_{i',j'}=F_{i,j'}^{\oplus \dim e_jAe_{i'}}, 
\end{displaymath}
as follows from the computation
\begin{displaymath}
Ae_i\otimes_{\Bbbk}e_jA \otimes_A Ae_i\otimes_{\Bbbk}e_jA\cong
Ae_i\otimes_{\Bbbk}e_{j'}A^{\oplus \dim e_jAe_{i'}}.
\end{displaymath}

The above implies that 
\begin{displaymath}
\mathcal{S}(\cC_A)=\{\mathbbm{1},F_{i,j}\,:\, i,j=1,2,\dots,k\} 
\end{displaymath}
and the multiplicity function defining the multi-multisemigroup structure on $\mathcal{S}(\cC_A)$
is given by 
\begin{displaymath}
\mu_{F,G}(H)=
\begin{cases}
1,& H=G\text{ and }F=\mathbbm{1};\\
1,& H=F\text{ and }G=\mathbbm{1};\\
\dim e_jAe_{i'},& F=F_{i,j}, G=F_{i',j'}, H=F_{i,j'};\\
0, & \text{otherwise}. 
\end{cases}
\end{displaymath}
Note also that, in this case, the multioperation in the multisemigroup
$(\mathcal{S}(\cC_A),*)$ is, at most, single valued. By adding, if necessary, an external 
element $0$, we can turn $(\mathcal{S}(\cC_A),*)$ into a genuine semigroup.

\subsection{Soergel bimodules for finite Coxeter groups}\label{s5.2}

Another prominent example of a finitary $2$-category is the finitary $2$-category
of Soergel bimodules. Let $W$ be a finite Coxeter group with a fixed geometric
representation. To these data, one associates the so-called $2$-category $\cS_W$ 
of Soergel bimodules over the coinvariant algebra of the geometric representation,
see \cite{So} and \cite[Subsection~7.1]{MM1}. This is a finitary $2$-category.
This $2$-category categorifies the integral group ring of $W$ in the sense that
there is an isomorphism between the ring $[\cC]_{\oplus}(\mathtt{i},\mathtt{i})$
and the ring $\mathbb{Z}[W]$ given in terms of the Kazhdan-Lusztig basis in 
$\mathbb{Z}[W]$, see \cite{KL}. Therefore the multi-multisemigroup $(\mathcal{S}(\cS_W),\mu)$
records exactly the information about the structure constants of the 
ring $\mathbb{Z}[W]$ with respect to the Kazhdan-Lusztig basis. 
As far as we know, there is no explicit combinatorial formula for such structure 
constants, however, they can be determined using a recursive algorithm.

In the special case of a Dihedral group $D_n$, where $n\geq 3$, 
\begin{displaymath}
W=D_n=\{s,t\,:\, s^2=t^2=(st)^n=e\},
\end{displaymath}
the Kazhdan-Lusztig basis has particularly simple form. Elements of the group $D_n$
can be listed as
\begin{displaymath}
D_n=\{e,s,t,st,ts,\dots,w_0\},
\end{displaymath}
where $w_0=stst\dots=tsts\dots$, where the length of both words is $n$. Then, for each
$w\in D_n$, the corresponding Kazhdan-Lusztig basis element $\underline{w}\in \mathbb{Z}[D_n]$
is the sum of $w$ with all elements of strictly smaller length. 

Let $\mathbf{l}:D_n\to\mathbb{Z}_{\geq0}$ be the length function with respect to 
generators $s$ and $t$. A direct calculation then shows that 
\begin{displaymath}
\underline{s}\cdot \underline{w}=
\begin{cases}
2 \underline{w}, & \mathbf{l}(sw)<\mathbf{l}(w);\\
\underline{sw}, & w=e\text{ or }w=t;\\
\underline{sw}+\underline{tw}, & \text{otherwise};
\end{cases}
\qquad
\underline{t}\cdot \underline{w}=
\begin{cases}
2 \underline{w}, & \mathbf{l}(tw)<\mathbf{l}(w);\\
\underline{tw}, & w=e\text{ or }w=s;\\
\underline{sw}+\underline{tw}, & \text{otherwise}.
\end{cases}
\end{displaymath}
This already shows that the multi-multisemigroup structure is non-trivial in the sense
that it is not reducible to a multisemigroup structure. The above formulae determine
the multiplicity functions $\mu_{s,w}$ and $\mu_{t,w}$. As any element in $D_n$ is a product
of $s$ and $t$, all remaining multiplicity functions can be determined inductively. 
However, we do not know of any explicit formulae. For $n=3$, the answer is given in \eqref{eq2}.
More information on the $D_n$ case can be found in \cite{El}.

\subsection{Catalan monoid}\label{s5.3}

Let $n$ be a positive integer.
Consider the path algebra $A=A_n$ over $\mathbb{C}$ of the quiver
\begin{displaymath}
\xymatrix{1\ar[r]&2\ar[r]&3\ar[r]&\dots\ar[r]&n}. 
\end{displaymath}
Let $\mathcal{C}$ be a small category equivalent to $A$-mod. Following
\cite{GM0}, we define the $2$-category $\cG_n$ as a subcategory
in $\mathbf{Cat}$ (not full) such that:
\begin{itemize}
\item $\cC_A$ has one object $\mathtt{i}$, which we identify with $\mathcal{C}$;
\item $1$-morphisms in $\cC_A$ are functors isomorphic to direct sums of 
functors of tensoring with subbimodules of  ${}_AA_A$;
\item $2$-morphisms in $\cC_A$ are natural transformations of functors.
\end{itemize}
The main result of  \cite{GM0} asserts that the multisemigroup $\mathcal{S}(\cG_n)$
(with added zero) is isomorphic to the Catalan monoid $\mathtt{C}_{n+1}$ of all 
order-preserving and order-decreasing transformation of a finite chain with $n+1$ elements.
In particular, the multisemigroup $\mathcal{S}(\cG_n)$ is a semigroup.

Moreover, in \cite{GM0} it is also shown that the composition of indecomposable
$1$-morphism in $\cG_n$ is indecomposable (or zero). This means that, in this case,
the multi-multisemigroup structure on $\mathcal{S}(\cG_n)$ coincides with the 
multisemigroup structure.

A similar phenomenon was observed in some other cases in \cite{Zh1,Zh2}.

\section{Multi-multisemigroups with different multiplicities}\label{s6}

\subsection{Cardinal reduction}\label{s6.1}

Let $\lambda<\kappa$ be two cardinal numbers. Then there is a canonical homomorphism
\begin{displaymath}
\Phi_{\lambda,\kappa}: \mathrm{Card}_{\kappa}\to  \mathrm{Card}_{\lambda}
\end{displaymath}
of complete semirings defined as follows:
\begin{displaymath}
\Phi_{\lambda,\kappa}(\nu)=
\begin{cases}
\nu,& \nu\leq \lambda;\\
\lambda, & \text{ otherwise}.
\end{cases}
\end{displaymath}

\begin{proposition}\label{prop15}
Let $(S,\mu)$ be a multisemigroup with multiplicities bounded by $\kappa$.
Then $(S,\Phi_{\lambda,\kappa}\circ \mu)$ is a multisemigroup with multiplicities bounded by $\lambda$.
\end{proposition}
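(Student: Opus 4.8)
The plan is to verify the distributivity axiom \eqref{eq5} for the pair $(S,\Phi_{\lambda,\kappa}\circ\mu)$, since the set $S$ is unchanged and nonempty, and $\Phi_{\lambda,\kappa}\circ\mu$ is manifestly a map $S\times S\to\mathcal{B}_\lambda(S)$ (as $\Phi_{\lambda,\kappa}$ takes values in $\mathrm{Card}_\lambda$). So the only content is: applying $\Phi_{\lambda,\kappa}$ entrywise to the identity \eqref{eq5} which holds in $\mathcal{B}_\kappa(S)$ produces the corresponding identity in $\mathcal{B}_\lambda(S)$. The key observation that makes this work is that $\Phi_{\lambda,\kappa}$ is a homomorphism of \emph{complete} semirings, so it commutes with arbitrary sums $\sum_{i\in I}$ and with products, hence with the operation $\nu\mapsto\eta\nu=\sum_{i\in\eta}\nu$ appearing in \eqref{eq5}.

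First I would fix $r,s,t\in S$ and evaluate both sides of the proposed axiom at an arbitrary element $x\in S$, so that everything becomes an honest identity of cardinals in $\mathrm{Card}_\lambda$. Writing $\bar\mu=\Phi_{\lambda,\kappa}\circ\mu$, the left side evaluated at $x$ is $\sum_{i\in S}\bar\mu_{s,t}(i)\,\bar\mu_{r,i}(x)=\sum_{i\in S}\Phi_{\lambda,\kappa}(\mu_{s,t}(i))\cdot\Phi_{\lambda,\kappa}(\mu_{r,i}(x))$. Using that $\Phi_{\lambda,\kappa}$ preserves products, this equals $\sum_{i\in S}\Phi_{\lambda,\kappa}\bigl(\mu_{s,t}(i)\cdot\mu_{r,i}(x)\bigr)$, and then using that $\Phi_{\lambda,\kappa}$ preserves arbitrary sums, this equals $\Phi_{\lambda,\kappa}\bigl(\sum_{i\in S}\mu_{s,t}(i)\cdot\mu_{r,i}(x)\bigr)$. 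By \eqref{eq5} for $(S,\mu)$, the argument inside equals $\sum_{j\in S}\mu_{r,s}(j)\cdot\mu_{j,t}(x)$. Running the same two steps backwards transforms $\Phi_{\lambda,\kappa}$ of this quantity into $\sum_{j\in S}\bar\mu_{r,s}(j)\,\bar\mu_{j,t}(x)$, which is exactly the right side of the axiom for $(S,\bar\mu)$ evaluated at $x$. Since $x$ was arbitrary, \eqref{eq5} holds for $(S,\bar\mu)$.

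There is really no main obstacle here; the statement is a formal consequence of $\Phi_{\lambda,\kappa}$ being a complete-semiring homomorphism, and the proof is a one-line "apply the homomorphism to both sides of \eqref{eq5}." The only point requiring a moment's care — and the thing I would state explicitly — is that $\Phi_{\lambda,\kappa}$ genuinely \emph{is} such a homomorphism (not merely a semiring homomorphism), i.e.\ that it commutes with infinite sums; this is either taken as part of the setup of Subsection~\ref{s6.1} or checked directly from the definition of $\sum_{i\in I}$ as a supremum/disjoint-union capped at the relevant cardinal, in which case capping at $\lambda$ after capping at $\kappa$ gives the same result as capping at $\lambda$ directly. Once that is granted, the argument above goes through verbatim, and one may reasonably compress it in the paper to the sentence "apply the complete-semiring homomorphism $\Phi_{\lambda,\kappa}$ to \eqref{eq5}."
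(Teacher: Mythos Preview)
Your proof is correct and follows exactly the same approach as the paper: apply the complete-semiring homomorphism $\Phi_{\lambda,\kappa}$ to both sides of \eqref{eq5}. The paper compresses it to the single sentence you anticipated at the end of your proposal, while you have written out the verification in more detail.
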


\begin{proof}
The axiom \eqref{eq5} in the new situation (for $\lambda$) follows from the axiom \eqref{eq5} in the
old situation (for $\kappa$) by applying the homomorphism $\Phi_{\lambda,\kappa}$ of complete
semirings to both sides. 
\end{proof}

A special case of this construction was mentioned in Subsection~\ref{s4.1}, in that case
$\kappa=\omega$ and $\lambda=1$. A natural question is whether this construction is 
``surjective'' in the sense that any multisemigroups with multiplicities bounded by $\lambda$
can be obtained in this way from a multisemigroups with multiplicities bounded by $\kappa$.
If $\lambda=1$, the answer is {\bf yes} due to the following construction: 

Let $\kappa$ be a nonzero cardinal numbers. Then there is a canonical homomorphism
\begin{displaymath}
\Psi_{\kappa}: \mathrm{Card}_{1}\cong\mathbf{B}\to  \mathrm{Card}_{\kappa}
\end{displaymath}
which sends $0$ to $0$ and also sends $1$ to $\kappa$.
Given a multisemigroup $(S,*)$, we thus may define 
\begin{displaymath}
\mu_{s,t}(r):=
\begin{cases}
0,& r\not\in s*t;\\ 
\kappa, & r\in s*t.
\end{cases}
\end{displaymath}
In other words, we define $\mu$ as the composition of $*$ followed by $\Psi_{\kappa}$. Similarly to the proof
of Proposition~\ref{prop15} we thus get that $(S, \mu)$ is a multisemigroups with multiplicities 
bounded by $\kappa$. As the homomorphism $\Phi_{1,\kappa}\circ \Psi_{\kappa}$ is the 
identity on $\mathbf{B}$, we obtain $(S,*)=(S,\Phi_{1,\kappa}\circ \mu)$.

\subsection{Finitary cardinal reduction}\label{s6.2}

To avoid degenerate examples above, it is natural to rephrase the question as follows: 
Given a multisemigroup $(S,*)$, whether there is a {\em finitary} multi-multisemigroup
$(S,\mu)$ such that $(S,*)=(S,\Phi_{1,\omega}\circ\mu)$. The following example shows that
this is, in general, {\bf not} the case.

\begin{proposition}\label{prop21} 
{\hspace{2mm}}

\begin{enumerate}[$($i$)$]
\item\label{prop21.1} There is a multisemigroup $(\{a,b\},*)$
with the following multiplication table:
\begin{equation}\label{eq11}
\begin{array}{c||c|c}
*& a& b\\
\hline\hline
a&\{a\}&\{a,b\}\\
\hline
b&\{a,b\}&\{a,b\}
\end{array}
\end{equation}
\item\label{prop21.2}
The multisemigroup $(\{a,b\},*)$ is not of the form $(S,\Phi_{1,\omega}\circ \mu)$, for any finitary  
multisemigroup $(S,\mu)$ with multiplicities.
\end{enumerate}
\end{proposition}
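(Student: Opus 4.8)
The plan is to verify part~\eqref{prop21.1} by a direct (if tedious) check of associativity, and then to prove part~\eqref{prop21.2} by extracting a numerical contradiction from the hypothetical finitary lift. For \eqref{prop21.1}, I would observe that the table is symmetric and that $a$ behaves almost like an absorbing-from-below element: $a*a=\{a\}$ while every other product equals $\{a,b\}$. To check the associativity axiom $\bigcup_{s\in x*y}s*z=\bigcup_{s\in y*z}x*s$ one splits into cases according to how many of $x,y,z$ equal $a$. If at least two of them equal $b$, both sides visibly evaluate to $\{a,b\}$ because some factor $\{a,b\}$ gets hit with a $*$ that produces $\{a,b\}$; the only delicate cases are $(a,a,a)$, which gives $\{a\}$ on both sides, and the three cases with exactly one $b$, e.g. $(a,a,b)$: left side is $\bigcup_{s\in\{a\}}s*b=a*b=\{a,b\}$, right side is $\bigcup_{s\in a*b}a*s=(a*a)\cup(a*b)=\{a\}\cup\{a,b\}=\{a,b\}$. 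All remaining cases are symmetric or identical, so associativity holds.

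For \eqref{prop21.2}, suppose $(S,\mu)$ with $S=\{a,b\}$ is a finitary multi-multisemigroup with $\Phi_{1,\omega}\circ\mu=*$. By Proposition~\ref{prop12}, writing $m^r_{x,y}:=\mu_{x,y}(r)\in\mathbb Z_{\geq 0}$, the bilinear extension makes $\mathbb Z[S]$ an associative ring with $x\cdot y=\sum_r m^r_{x,y}\,r$; finitariness just guarantees these are genuine nonnegative integers. The support condition $\Phi_{1,\omega}\circ\mu=*$ forces the support pattern dictated by \eqref{eq11}: $m^b_{a,a}=0$, $m^a_{a,a}=:p\geq1$, and all of $m^a_{a,b},m^b_{a,b},m^a_{b,a},m^b_{b,a},m^a_{b,b},m^b_{b,b}$ are $\geq1$. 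Now I would feed the element $b\cdot b\cdot b$ through associativity, or more efficiently look at coefficients of $a$ in $(a\cdot b)\cdot a$ versus $a\cdot(b\cdot a)$ and similar triples, to pin down the integers. The cleanest route is: the coefficient of $a$ in $a\cdot(a\cdot b)=a\cdot(m^a_{a,b}a+m^b_{a,b}b)$ is $m^a_{a,b}\,p+m^b_{a,b}\,m^a_{a,b}=m^a_{a,b}(p+m^b_{a,b})$, while the coefficient of $a$ in $(a\cdot a)\cdot b=p\,(a\cdot b)$ is $p\,m^a_{a,b}$; equating gives $m^a_{a,b}\,m^b_{a,b}=0$, contradicting $m^b_{a,b}\geq1$ and $m^a_{a,b}\geq1$.

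So the associativity constraint $a\cdot(a\cdot b)=(a\cdot a)\cdot b$ alone is already incompatible with the prescribed support, and no finitary lift exists. I would present this as: assume the lift, derive the above coefficient identity from \eqref{eq5} (equivalently from Proposition~\ref{prop12}), and note the contradiction. The main obstacle, such as it is, is bookkeeping — making sure the correspondence between "support of $\mu$" and the table \eqref{eq11} is stated precisely (so that the four entries really are forced to be nonzero and $m^b_{a,a}$ really is forced to be zero), and choosing the one triple $(a,a,b)$ that yields the sharpest contradiction rather than wading through all eight triples. It is worth remarking explicitly that finitariness is essential here: over $\mathrm{Card}_\omega$ with genuinely infinite values the identity $m^a_{a,b}\cdot m^b_{a,b}=0$ could be evaded by absorbing $\aleph_0$, which is exactly why the construction $\Psi_\kappa$ of Subsection~\ref{s6.1} succeeds without the finitary hypothesis.
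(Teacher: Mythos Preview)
Your argument is correct and essentially identical to the paper's: the same case split handles part~\eqref{prop21.1}, and for part~\eqref{prop21.2} you use exactly the triple $(a,a,b)$ and compare the coefficient of $a$ on both sides to force $\mu_{a,b}(a)\,\mu_{a,b}(b)=0$, contradicting the support condition. The only cosmetic difference is that the paper phrases the computation via the function $\overline{\mu}_{aab}$ from Subsection~\ref{s3.5}, whereas you pass through the associative algebra $\Bbbk[S]$ of Proposition~\ref{prop12}; the underlying identity is the same instance of~\eqref{eq5}.
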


\begin{proof}
First we show that the multiplication table \eqref{eq11} really defines a multisemigroup.
We need to check the associativity axiom $x*(y*z)=(x*y)*z$. If $x=y=z=a$, then both sides
are equal to $a$. If $x=b$ or $y=b$ or $z=b$, then both sides are equal to $\{a,b\}$.

Now assume that $(\{a,b\},\mu)$ is a finitary multisemigroup 
with multiplicities. Then $\mu_{a,b}(a)\neq 0$ because $a\in a*b$, moreover, we have $\mu_{a,b}(b)\neq 0$
as $b\in a*b$.

We want to compute $\overline{\mu}_{aab}(a)$ in two different ways, namely, using the decompositions
$(aa)b$ and $a(ab)$.  In the first case, we get 
$\overline{\mu}_{aab}(a)=\mu_{a,a}(a)\mu_{a,b}(a)$. In the second case, we obtain
$\overline{\mu}_{aab}(a)=\mu_{a,a}(a)\mu_{a,b}(a)+\mu_{a,b}(b)\mu_{a,b}(a)$.
As both $\mu_{a,b}(a)\neq 0$ and $\mu_{a,b}(b)\neq 0$, we get a contradiction.
The claim follows.
\end{proof}

\subsection{Deformation of multisemigroups}\label{s6.3}

Let $(S,*)$ be a finite multisemigroup. A finitary multi-multisemigroup  $(S,\mu)$
such that $(S,*)=(S,\Phi_{1,\omega}\circ\mu)$ is called a {\em deformation} of $(S,*)$.
As we saw above, not every finite multisemigroup admits a deformation. It would be interesting
to find some sufficient and necessary conditions for a multisemigroup to admit a non-trivial
deformation. Ideally, it would be really interesting to find some way to describe all possible
deformations of a given multisemigroup. The following is a corollary from the
result in the previous subsection.

\begin{corollary}\label{cor32}
Let $(S,*)$ be a multisemigroup containing two different elements $a$ and $b$ such that 
$a*a=\{a\}$ and $\{a,b\}\subset a*b$ or $\{a,b\}\subset b*a$. Then $(S,*)$ does not admit any
deformation.
\end{corollary}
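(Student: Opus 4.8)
The plan is to reduce \textbf{Corollary~\ref{cor32}} directly to the impossibility established in \textbf{Proposition~\ref{prop21}}. Suppose, for contradiction, that $(S,*)$ is a multisemigroup containing distinct elements $a,b$ with $a*a=\{a\}$ and, say, $\{a,b\}\subset a*b$ (the case $\{a,b\}\subset b*a$ is symmetric, using a right-handed version of the computation below, or by passing to the opposite multisemigroup), and that $(S,\mu)$ is a finitary multi-multisemigroup with $(S,*)=(S,\Phi_{1,\omega}\circ\mu)$. The idea is that the argument in the proof of Proposition~\ref{prop21}\eqref{prop21.2} never really used that $S=\{a,b\}$; it only used the local information $a*a=\{a\}$ and $a,b\in a*b$. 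So I would rerun that computation verbatim inside the larger $S$.

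Concretely, first I would record the translation between $*$ and $\mu$: since $*=\Phi_{1,\omega}\circ\mu$, for any $x,y,z\in S$ we have $z\in x*y \iff \mu_{x,y}(z)\neq 0$. From $a*a=\{a\}$ we get $\mu_{a,a}(a)\neq 0$ and $\mu_{a,a}(c)=0$ for all $c\neq a$; from $\{a,b\}\subset a*b$ we get $\mu_{a,b}(a)\neq 0$ and $\mu_{a,b}(b)\neq 0$. Next I would compute $\overline{\mu}_{aab}(a)$ in the two ways provided by \eqref{eq5-1} and \eqref{eq5-2} (legitimate by Proposition~\ref{prop17}). Using the suffix decomposition $a(ab)$, equation \eqref{eq5-1} gives $\overline{\mu}_{aab}(a)=\sum_{c\in S}\mu_{a,b}(c)\,\mu_{a,c}(a)$. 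Using the prefix decomposition $(aa)b$, equation \eqref{eq5-2} gives $\overline{\mu}_{aab}(a)=\sum_{c\in S}\mu_{a,a}(c)\,\mu_{c,b}(a)=\mu_{a,a}(a)\,\mu_{a,b}(a)$, since $\mu_{a,a}(c)=0$ for $c\neq a$. In the first sum the $c=a$ term is $\mu_{a,b}(a)\,\mu_{a,a}(a)$ and the $c=b$ term is $\mu_{a,b}(b)\,\mu_{b,b}(a)$, wait---I should be careful: the first sum has summand $\mu_{a,b}(c)\mu_{a,c}(a)$, so the $c=a$ term is $\mu_{a,b}(a)\mu_{a,a}(a)$ and the $c=b$ term is $\mu_{a,b}(b)\mu_{a,b}(a)$. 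Hence the two evaluations force
\begin{displaymath}
\mu_{a,a}(a)\,\mu_{a,b}(a)=\mu_{a,b}(a)\,\mu_{a,a}(a)+\mu_{a,b}(b)\,\mu_{a,b}(a)+\sum_{c\neq a,b}\mu_{a,b}(c)\,\mu_{a,c}(a),
\end{displaymath}
so in particular $\mu_{a,b}(b)\,\mu_{a,b}(a)=0$ after cancelling the common term; but finitariness means all these are honest natural numbers, and both factors are nonzero, giving $\mu_{a,b}(b)\mu_{a,b}(a)\neq 0$, a contradiction. (One subtlety: I want genuine equality of natural numbers and the ability to cancel, which is exactly what the finitariness hypothesis---$\kappa=\aleph_0$, all values $\neq\aleph_0$, finite support---buys us; this is the only place it is used, and it mirrors the role it plays in Proposition~\ref{prop21}.)

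The main obstacle, such as it is, is bookkeeping rather than conceptual: in the ambient $S$ the sums $\sum_{c\in S}$ range over possibly many elements, so I must argue that the extra terms $\sum_{c\neq a,b}\mu_{a,b}(c)\mu_{a,c}(a)$ are harmless---they are non-negative integers, hence can only make the right-hand side larger, which only sharpens the contradiction. The other point worth stating carefully is the ``$\{a,b\}\subset b*a$'' alternative: there I would instead compute $\overline{\mu}_{baa}(a)$ via $(ba)a$ versus $b(aa)$, obtaining $\mu_{b,a}(a)\mu_{a,a}(a)+\mu_{b,a}(b)\mu_{b,a}(a)+(\text{non-negative terms})$ on one side and $\mu_{a,a}(a)\mu_{b,a}(a)$ on the other, reaching the same contradiction. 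With that, the corollary follows.
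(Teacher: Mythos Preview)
Your proof is correct and follows exactly the approach intended by the paper: the paper's own proof simply says that the case $\{a,b\}\subset a*b$ ``follows from the arguments in the proof of Proposition~\ref{prop21}'' and that the other case is similar, and you have carefully unpacked precisely those arguments inside the larger ambient $S$, correctly observing that the extra summands $\sum_{c\neq a,b}\mu_{a,b}(c)\mu_{a,c}(a)$ are non-negative integers (by finitariness) and hence only reinforce the contradiction. The symmetric computation of $\overline{\mu}_{baa}(a)$ for the case $\{a,b\}\subset b*a$ is likewise exactly what the paper has in mind.
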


\begin{proof}
In the case  $\{a,b\}\subset a*b$, the claim follows from the arguments in the proof of 
Proposition~\ref{prop21}. In case $\{a,b\}\subset b*a$ the proof is similar.
\end{proof}

Another obvious observation is the following.

\begin{proposition}\label{prop33}
Let $\cC$ be a finitary $2$-category. Then $(\mathcal{S}(\cC),*)$ admits a deformation. 
\end{proposition}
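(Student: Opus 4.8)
The plan is to exhibit the finitary multi-multisemigroup $(\mathcal{S}(\cC),\mu)$ constructed in Theorem~\ref{prop7} as the required deformation. Recall from Subsection~\ref{s3.7} that $\mu_{F,G}(H)$ is defined to be the multiplicity of $H$ as a direct summand of the composition $F\circ G$, and that Theorem~\ref{prop7} already establishes that $(\mathcal{S}(\cC),\mu)$ is a \emph{finitary} multisemigroup with multiplicities (finitariness uses that $\cC$ is finitary, hence has finitely many indecomposable $1$-morphisms and finite-dimensional $2$-morphism spaces, so all multiplicities are finite natural numbers and each composition has finitely many summands). Thus the only thing left to verify is that applying the cardinal-reduction homomorphism $\Phi_{1,\omega}$ to $\mu$ recovers the original multisemigroup operation $*$ on $\mathcal{S}(\cC)$.

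First I would invoke Proposition~\ref{prop11}, which states precisely that $[F]*[G]=\Phi\circ\mu_{F,G}$, where $\Phi:\mathrm{Card}_{\omega}\tto\mathbf{B}$ sends $0$ to $0$ and every nonzero cardinal to $1$. Since $\Phi$ is exactly the homomorphism $\Phi_{1,\omega}:\mathrm{Card}_{\omega}\to\mathrm{Card}_1\cong\mathbf{B}$ of Subsection~\ref{s6.1} (both send $0\mapsto 0$ and everything positive to $1$), this says $(\mathcal{S}(\cC),*)=(\mathcal{S}(\cC),\Phi_{1,\omega}\circ\mu)$. By the definition of deformation in Subsection~\ref{s6.3}, $(\mathcal{S}(\cC),\mu)$ is therefore a deformation of $(\mathcal{S}(\cC),*)$, which is what was claimed.

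There is essentially no obstacle here: the statement is a direct bookkeeping consequence of results already proved. The only point requiring a word of care is the identification of the two homomorphisms $\Phi$ and $\Phi_{1,\omega}$, and the observation that ``$(S,*)$ is a finite multisemigroup'' in the definition of deformation is automatically satisfied because $\cC$ is finitary (so $\mathcal{S}(\cC)$ is a finite set). I would write the proof in two or three lines: cite Theorem~\ref{prop7} for finitariness of $(\mathcal{S}(\cC),\mu)$, cite Proposition~\ref{prop11} together with the identification $\Phi=\Phi_{1,\omega}$ for the equality $(\mathcal{S}(\cC),*)=(\mathcal{S}(\cC),\Phi_{1,\omega}\circ\mu)$, and conclude by the definition of deformation.
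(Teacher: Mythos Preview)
Your proposal is correct and follows exactly the paper's approach: the paper's proof is the single line ``By construction, $(\mathcal{S}(\cC),\mu)$ is a deformation of $(\mathcal{S}(\cC),*)$,'' and you have simply unpacked this by citing Theorem~\ref{prop7} and Proposition~\ref{prop11} and identifying $\Phi$ with $\Phi_{1,\omega}$. Your additional remark that finiteness of $\mathcal{S}(\cC)$ (needed for the definition of deformation) follows from finitariness of $\cC$ is a useful clarification the paper leaves implicit.
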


\begin{proof}
By construction, $(\mathcal{S}(\cC),\mu)$ is a deformation of  $(\mathcal{S}(\cC),*)$.
\end{proof}

\subsection{Connection to twisted semigroup algebras}\label{s6.4}

In case a finite multisemigroup $(S,*)$ is a semigroup, deformations of $(S,*)$ can be understood
as integral twisted semigroup algebras in the sense of \cite{GX}. Indeed, according to the above
definition, a deformation of a finite semigroup $(S,*)$ is given by a map
\begin{displaymath}
\mu:S\times S\to \mathbb{Z}_{\geq 0}, 
\end{displaymath}
which satisfies axiom \eqref{eq5} (the associativity axiom). This is a special case of the 
definition of twisted semigroup algebras, see \cite[Section~3]{GX} or \cite[Equation~(1)]{Wi}.
Typical examples of semigroups which admit non-trivial twisted semigroup algebras
(and hence also non-trivial deformations) are various diagram algebras, see \cite{MMa,Wi} for details.

\section{Multi-multisemigroups and modules over complete semirings}\label{s7}

\subsection{Modules over semirings}\label{s7.1}

Let $R$ be a unital semiring. A {\em (left) $R$-module} is a commutative monoid $(M,+,0)$ together
with the map $\cdot:R\times M\to M$, written $(r,m)\mapsto r\cdot m$, such that
\begin{itemize}
\item $(rs)\cdot m=r\cdot(s\cdot m)$, for all $r,s\in R$ and $m\in M$;
\item $(r+s)\cdot m=r\cdot m +s\cdot m$, for all $r,s\in R$ and $m\in M$;
\item $r\cdot (m+n)=r\cdot m+ r\cdot n$, for all $r\in R$ and $m,n\in M$;
\item $0\cdot m=0$, for all  $m\in M$;
\item $1\cdot m=m$, for all $m\in M$.
\end{itemize}
We refer, for example, to \cite{JM} for more details.
For instance, the multiplication on $R$ defines on $R$ the structure of a left $R$-module
${}_RR$, called the {\em regular} module.

\subsection{Modules over complete semirings}\label{s7.2}

Let $R$ be a complete unital semiring. A {\em (left) complete $R$-module} is an
$R$-module $(M,+,0,\cdot)$ such that 
\begin{itemize}
\item $M$ is complete;
\item $\displaystyle r\cdot \sum_{i\in I}m_i=\sum_{i\in I}r\cdot m_i$, for all $r\in R$ and $m_i\in M$;
\item $\displaystyle \left(\sum_{i\in I}r_i\right)\cdot m=\sum_{i\in I}r_i\cdot m$, for all $r_i\in R$ and $m\in M$.
\end{itemize}
For example, the  regular $R$-module is complete. Another important for us example of a
complete $R$-module is the following.

\begin{example}\label{ex31}
{\rm 
Let $R$ be a complete unital semiring and $X$ a non-empty set. Then the set $R^X$ of all
functions from $X$ to $R$ is a complete abelian group with respect to component-wise
addition, moreover, it has the natural structure of a complete $R$-module given by 
component-wise multiplication with elements in $R$.  This module has, as an 
incomplete submodule, the set of all functions in $R^X$ with finitely many non-zero values.
}
\end{example}

\subsection{Algebras over complete semirings}\label{s7.3}

For a complete unital semiring $R$ and a non-empty set $X$, consider the 
complete $R$-module $R^X$ as in Example~\ref{ex31} above. An {\em algebra} structure on
$R^X$ is a map $\bullet:R^X\times R^X\to R^X$ such that, for all $f_i,f,g,h\in R^X$, we have
\begin{gather}
\label{eqalg1}
\left(\sum_{i\in I}f_i\right)\bullet g=
\sum_{(i,j)\in I\times J} f_i\bullet g;\\
\label{eqalg2}
g\bullet \left(\sum_{i\in I}f_i\right)=
\sum_{(i,j)\in I\times J} g\bullet f_i;\\
\label{eqalg3}
f\bullet (g\bullet h)=(f\bullet g)\bullet h.
\end{gather}
For example, if $X=\{a\}$, then  $R^X=R$ and the multiplication $\cdot$ on $R$ defines
on $R$ the structure of a complete $R$-algebra.

\subsection{Connection to multi-multisemigroups}\label{s7.4}

If $R$ is a semiring and $X$ a set, then, for $x\in X$, we denote by $\chi_x:X\to R$ the 
{\em indicator function} of $x$ defined as follows:
\begin{displaymath}
\chi_x(y)=
\begin{cases}
1,& x=y;\\
0, & x\neq y.
\end{cases}
\end{displaymath}

Our main result in the section is the following:

\begin{theorem}\label{thm31}
{\hspace{2mm}}

\begin{enumerate}[$($i$)$]
\item\label{thm31.1} Let $\kappa$ be a cardinal and $(S,\mu)$ be a multisemigroup with 
multiplicities bounded by $\kappa$. Then $\mathrm{Card}_{\kappa}^S$ has a unique structure
of a complete $\mathrm{Card}_{\kappa}$-algebra such that $\chi_s\bullet\chi_t=\mu_{s,t}$,
for all $s,t\in S$.
\item\label{thm31.2} Conversely, any complete $\mathrm{Card}_{\kappa}^S$-algebra 
$(\mathrm{Card}_{\kappa}^S,\bullet)$ defines a unique structure of 
a multisemigroup with  multiplicities bounded by $\kappa$ on $S$ via
$\mu_{s,t}:=\chi_s\bullet\chi_t$, for $s,t\in S$.
\end{enumerate}
\end{theorem}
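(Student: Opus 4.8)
The plan is to set up the natural bijection between binary ``multiplication'' maps on $S$ (with values multisets bounded by $\kappa$) and bilinear-extended, completeness-respecting products on $\mathrm{Card}_{\kappa}^S$, and then observe that under this bijection the distributivity axiom \eqref{eq5} corresponds exactly to associativity \eqref{eqalg3}, while \eqref{eqalg1} and \eqref{eqalg2} are the statement that the product is determined by its values on indicator functions. Throughout I would use the observation, implicit in Example~\ref{ex31}, that every element $f\in\mathrm{Card}_{\kappa}^S$ can be written as the (in general infinite) sum $f=\sum_{s\in S}f(s)\chi_s$, where $f(s)\chi_s$ means adding up $f(s)$ copies of $\chi_s$ in the sense of the ``$\lambda\nu$'' notation from Subsection~\ref{s3.5}; this is precisely the third axiom of a complete monoid applied to the partition of $S$ into singletons.

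For part \eqref{thm31.1}, given $(S,\mu)$, I would \emph{define} $\bullet$ by the forced formula
\begin{displaymath}
f\bullet g:=\sum_{(s,t)\in S\times S} \bigl(f(s)g(t)\bigr)\,\mu_{s,t},
\end{displaymath}
which on indicator functions gives $\chi_s\bullet\chi_t=\mu_{s,t}$ since $\chi_s(s')=1$ iff $s'=s$. Uniqueness is immediate: any $\bullet$ satisfying \eqref{eqalg1}, \eqref{eqalg2} and the prescribed values on indicators must, after expanding $f=\sum_s f(s)\chi_s$ and $g=\sum_t g(t)\chi_t$ and pushing the sums through $\bullet$, equal this formula (the coefficient $f(s)g(t)$ arises because distributing $f(s)$ copies against $g(t)$ copies yields $f(s)g(t)$ copies, and multiplication of cardinals in $\mathrm{Card}_{\kappa}$ is exactly this count). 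Axioms \eqref{eqalg1} and \eqref{eqalg2} for the formula then follow from the complete-monoid reindexing axiom together with the fact that multiplication in $\mathrm{Card}_{\kappa}$ distributes over all sums (this is part of the definition of a complete semiring). The only substantive point is \eqref{eqalg3}: expanding $f\bullet(g\bullet h)$ and $(f\bullet g)\bullet h$ using the formula and $\mu_{s,t}\in\mathrm{Card}_{\kappa}^S$, one reduces — after interchanging sums, which is legitimate by completeness — to comparing $\sum_{i\in S}\mu_{t,u}(i)\,\mu_{s,i}$ with $\sum_{j\in S}\mu_{s,t}(j)\,\mu_{j,u}$ for each pair $(s,t,u)$ contributing a coefficient $f(s)g(t)h(u)$; these are equal by \eqref{eq5}. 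So $\bullet$ is a complete $\mathrm{Card}_{\kappa}$-algebra structure.

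For part \eqref{thm31.2}, conversely, given any complete $\mathrm{Card}_{\kappa}$-algebra $(\mathrm{Card}_{\kappa}^S,\bullet)$, set $\mu_{s,t}:=\chi_s\bullet\chi_t$; this lies in $\mathrm{Card}_{\kappa}^S=\mathcal{B}_{\kappa}(S)$, so $\mu$ is a candidate multiplicity datum, and it is the unique one with $\chi_s\bullet\chi_t=\mu_{s,t}$. It remains to verify \eqref{eq5}. Applying \eqref{eqalg3} to $f=\chi_s$, $g=\chi_t$, $h=\chi_u$ and using \eqref{eqalg1}/\eqref{eqalg2} to expand $\chi_s\bullet(\chi_t\bullet\chi_u)=\chi_s\bullet\bigl(\sum_{i\in S}\mu_{t,u}(i)\chi_i\bigr)=\sum_{i\in S}\mu_{t,u}(i)(\chi_s\bullet\chi_i)=\sum_{i\in S}\mu_{t,u}(i)\mu_{s,i}$, and symmetrically $(\chi_s\bullet\chi_t)\bullet\chi_u=\sum_{j\in S}\mu_{s,t}(j)\mu_{j,u}$, equality of the two sides of \eqref{eqalg3} gives exactly \eqref{eq5}. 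Hence $(S,\mu)$ is a multisemigroup with multiplicities bounded by $\kappa$, and the two constructions are visibly mutually inverse.

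The main obstacle I anticipate is purely bookkeeping rather than conceptual: making the manipulations ``expand $f=\sum_s f(s)\chi_s$, push $\bullet$ through the sum, reindex'' fully rigorous requires repeated appeals to the complete-monoid partition axiom and to two-sided distributivity of $\bullet$ over arbitrary sums, and one must be a little careful that the coefficient arithmetic ($\lambda$ copies of something, then $\mu$ copies of that, equals $\lambda\mu$ copies) is genuinely the cardinal multiplication in $\mathrm{Card}_{\kappa}$ — including the saturation convention that anything exceeding $\kappa$ collapses to $\kappa$, which is harmless since it is applied consistently on both sides. Everything else is formal.
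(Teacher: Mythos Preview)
Your proposal is correct and follows essentially the same approach as the paper: both arguments hinge on writing every element of $\mathrm{Card}_{\kappa}^S$ as a (possibly infinite) sum of indicator functions, using \eqref{eqalg1}--\eqref{eqalg2} to reduce everything to indicators, and then observing that \eqref{eqalg3} on indicators is precisely \eqref{eq5}. Your version is simply more explicit---you write down the closed formula $f\bullet g=\sum_{s,t}f(s)g(t)\,\mu_{s,t}$ and carry out the expansion for part~\eqref{thm31.2} in full---whereas the paper leaves these computations to the reader.
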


\begin{proof}
To prove claim~\eqref{thm31.1}, we first note that uniqueness would follow directly from 
existence as any element in $\mathrm{Card}_{\kappa}^S$ can be written as a
sum, over $S$, of indicator functions. To prove existence, we note that each function can
be {\em uniquely} written as a sum, over $S$, of indicator functions. Therefore, there is
a unique way to extend $\chi_s\bullet\chi_t:=\mu_{s,t}$, for $s,t\in S$, to a map
$\bullet:\mathrm{Card}_{\kappa}^S\times\mathrm{Card}_{\kappa}^S\to\mathrm{Card}_{\kappa}^S$
which satisfies \eqref{eqalg1} and \eqref{eqalg2}. Using \eqref{eqalg1} and \eqref{eqalg2},
it is enough to check the axiom \eqref{eqalg3} for the indicator functions, where it is
equivalent to the axiom \eqref{eq5}, by definition. This proves claim~\eqref{thm31.1}.

To prove claim~\eqref{thm31.2}, we only need to check the axiom \eqref{eq5}. This
axiom follows from the axiom \eqref{eqalg3} applied to the indicator functions.
This completes the proof.
\end{proof}

Theorem~\ref{thm31} suggests that one could define multisemigroups with 
multiplicities from any complete semiring, not necessarily $\mathrm{Card}_{\kappa}$.

\vspace{5mm}

\noindent
Department of Mathematics, Uppsala University, Box. 480,
SE-75106, Uppsala, SWEDEN, email: {\tt love.forsberg\symbol{64}math.uu.se}

\end{document}